\author{
  Yuehua Bu\thanks{
{\it Departement of Mathematics, Zhejiang Normal University, Jinhua 321004, P. R. China}, \texttt{yhbu@zjnu.cn}} \and
  Daniel W. Cranston\thanks{{\it DIMACS, Rutgers University, Piscataway, New Jersey 08854}, \texttt{dcransto@dimacs.rutgers.edu}} \and
  Mickaël Montassier\thanks{{\it LaBRI UMR CNRS 5800, Université Bordeaux I, 33405 Talence Cedex, France}, \texttt{montassi@labri.fr}} \and
  André Raspaud\thanks{{\it LaBRI UMR CNRS 5800, Université Bordeaux I, 33405 Talence Cedex, France}, \texttt{raspaud@labri.fr}} \and
  Weifan Wang\thanks{{\it Departement of Mathematics, Zhejiang Normal University, Jinhua 321004, P. R. China}, \texttt{wwf@zjnu.cn}}
}
\title{Star coloring of sparse graphs}
\newenvironment{proof}{\par \noindent \textbf{Proof.} \\}{\hfill$\Box$}
\newcommand{\ch}{\omega}
\newcommand{\nch}{\omega^*}
\newcommand{\Mad}{\textrm{Mad}}
\newcommand{\degree}{d}
\newtheorem{theorem}{Theorem}
\newtheorem{corollary}[theorem]{Corollary}
\newtheorem{definition}[theorem]{Definition}
\newtheorem{problem}[theorem]{Problem}
\newtheorem{lemma}[theorem]{Lemma}
\newtheorem{observation}[theorem]{Observation}
\newtheorem{remark}[theorem]{Remark}
\date{}
\begin{document}
\maketitle
\renewcommand{\abstractname}{Abstract}
\begin{abstract}
A proper coloring of the vertices of a graph is called a \emph{star
coloring} if the union of every two color classes induces a star forest. The star
chromatic number $\chi_s(G)$ is the smallest number of colors required
to obtain a star coloring of $G$.  In this paper, we study the
relationship between the star chromatic number $\chi_s(G)$ and the
maximum average degree $\Mad(G)$ of a graph $G$. We prove that:
\begin{enumerate}
\item 
If $G$ is a graph with $\Mad(G) < \frac{26}{11}$, then $\chi_s(G)\leq 4$.
\item 
If $G$ is a graph with $\Mad(G) < \frac{18}{7}$ and girth at least 6,
then $\chi_s(G)\leq 5$.
\item 
If $G$ is a graph with $\Mad(G) < \frac{8}{3}$ and girth at least 6,
then $\chi_s(G)\leq 6$.
\end{enumerate}
These results are obtained by proving that such graphs admit a
particular decomposition into a forest and some independent sets.


\end{abstract}

\section{Introduction}

Let $G$ be a graph. 
 A \emph{proper vertex coloring} of $G$ is an assignment $c$ of
integers (or labels) to the vertices of $G$ such that $c(u)\ne c(v)$
if the vertices $u$ and $v$ are adjacent in $G$. A \emph{$k$-coloring}
is a proper vertex coloring using $k$ colors. The minimum $k$ such that
$G$ has a $k$-coloring is called the \emph{chromatic number} of $G$,
denoted by $\chi(G)$.

A proper vertex coloring of a graph is {\it acyclic} if there is no
bicolored cycle in $G$. In other words, the graph induced by the union
of every two color classes is a forest. 
The acyclic coloring of graphs was introduced by Grünbaum in \cite{Gru73}.

Finally, a proper vertex coloring of a graph is called a \emph{star 
coloring} if the union of every two color classes induces a star forest. 
The {\it star chromatic number}, denoted by $\chi_s(G)$, is the smallest 
integer $k$ such that $G$ has a star $k$-coloring.

Star colorings have recently been investigated by Fertin, Raspaud, and
Reed  \cite{FRR01}, Ne\v set\v ril and Ossona de Mendez \cite{NOM03}, and
Albertson \emph{et al.} \cite{AC+04}.

Albertson \emph{et al.} 
proved an interesting relationship between
$\chi_s$, $\chi$, and $\chi_a$ of $G$ and some structures of $G$ (called
\emph{$F$-reductions}), which gives the following corollary.

\begin{corollary}\cite{AC+04}
Let $G$ be a planar graph. 
\begin{enumerate}
\item[$(i)$] If $g(G)\ge 3$, then $\chi_s(G) \le 20$.
\item[$(ii)$] If $g(G)\ge 5$, then $\chi_s(G) \le 16$.
\item[$(iii)$] If $g(G)\ge 7$, then $\chi_s(G) \le 9$.
\end{enumerate}
Here $g(G)$ is the girth of $G$, i.e. the length of a shortest cycle
in $G$.
\end{corollary}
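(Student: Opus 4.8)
The plan is to deduce all three bounds from the structural theorem of Albertson \emph{et al.}~\cite{AC+04}, which bounds $\chi_s(G)$ from above by an explicit function of $\chi(G)$, $\chi_a(G)$ and the chromatic numbers of the auxiliary graphs (the ``$F$-reductions'') attached to a fixed acyclic coloring of $G$. So the whole argument reduces to feeding that theorem the correct numerical data for planar graphs of girth $3$, $5$ and $7$, and then checking the resulting arithmetic.

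First I would collect the classical facts. Every planar graph $G$ satisfies $\chi(G)\le 4$ by the Four Color Theorem and $\chi_a(G)\le 5$ by Borodin's acyclic five-color theorem; if in addition $g(G)\ge 5$, then $G$ is triangle-free, so $\chi(G)\le 3$ by Gr\"otzsch's theorem; and the theorems of Borodin, Kostochka and Woodall on acyclic colorings of planar graphs of large girth lower the bound on $\chi_a(G)$ in the cases $g(G)\ge 5$ and $g(G)\ge 7$. Moreover the $F$-reductions of a planar graph are derived from it and from its bicolored forests, so they remain within easily controlled classes; hence the relevant planar bounds --- for $\chi$, and for $\chi_a$ under the girth hypotheses --- transfer to those auxiliary graphs as well, although some of the girth advantage may be lost in passing to a reduction.

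Next, in each girth regime I would fix an acyclic coloring of $G$ attaining the appropriate bound on $\chi_a(G)$, form the bicolored forests and their $F$-reductions, and apply the inequality of \cite{AC+04}. Substituting $\chi(G)\le 4$ and $\chi_a(G)\le 5$ in the girth-$3$ case, the sharper values of $\chi(G)$ and $\chi_a(G)$ available when $g(G)\ge 5$, and $\chi(G)\le 3$ together with the girth-$7$ acyclic bound in the last case, should produce $\chi_s(G)\le 20$, $\chi_s(G)\le 16$ and $\chi_s(G)\le 9$, respectively. Note that the cruder estimate of $\chi_s(G)$ in terms of $\chi_a(G)$ alone from \cite{FRR01} is far too weak for this, so it is essential to use the refined reduction of \cite{AC+04}.

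The one genuine difficulty is bookkeeping. One must keep careful track of \emph{which} graph each $F$-reduction is applied to, so as to guarantee that it stays in a class for which the invoked coloring bound is valid, and then check that no factor is wasted, so that the final count comes out exactly $20$, $16$ and $9$ rather than slightly larger. I expect that pinning down the exact constants in the reduction step --- making sure the girth hypotheses are exploited wherever possible --- will be the part most likely to require care; beyond that, no new combinatorial idea is needed past \cite{AC+04} and the cited planar (acyclic) coloring theorems.
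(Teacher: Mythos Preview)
The paper does not supply its own proof of this corollary: it is quoted verbatim from \cite{AC+04}, preceded only by the remark that Albertson \emph{et al.} proved a relationship between $\chi_s$, $\chi$, $\chi_a$ and certain $F$-reductions, ``which gives the following corollary.'' So there is nothing in this paper to compare your argument against.

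That said, your outline is in the right spirit and matches what \cite{AC+04} actually does: their key theorem is that if $G$ admits an acyclic $r$-coloring for which each of the associated auxiliary graphs is $s$-colorable, then $\chi_s(G)\le rs$. The planar bounds then drop out as $5\cdot 4=20$ (Borodin's $\chi_a\le 5$ together with the Four Color Theorem applied to the auxiliary graphs), $4\cdot 4=16$ (the girth-$5$ bound $\chi_a\le 4$ of Borodin--Kostochka--Woodall, with the auxiliary graphs again planar), and $3\cdot 3=9$ (the girth-$7$ bound $\chi_a\le 3$, with Gr\"otzsch on the auxiliary graphs). Your proposal gestures at exactly this, but it never states the $rs$ inequality explicitly, never pins down which acyclic bound is used at each girth, and leaves the verification that the auxiliary graphs remain planar (and of sufficient girth, where needed) as an unexamined assumption. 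As written it is a plausible plan rather than a proof; to turn it into one you would need to state the $F$-reduction theorem precisely and check the planarity and girth of the reductions in each case.
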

Moreover, Albertson \emph{et al.} gave a planar graph $G$ with $\chi_s(G)=10$ and
they observed that for any girth there exists a planar graph $G$ with
$\chi_s(G)>3$.

\paragraph{}
In this paper, we study the relationship between star coloring and maximum
average degree. The maximum average degree of $G$, denoted by
$\Mad(G)$, is defined as:

$$
\Mad(G) = \max \left\{ \frac{2\cdot|E(H)|}{|V(H)|}, H \subset G\right\}.
$$
Our main result is the following theorem.

\begin{theorem}\label{res}{\ } Let $G$ be a graph.
\begin{enumerate}
\item [$(i)$]
\label{res26/11}
If $\Mad(G) < \frac{26}{11}$, then $\chi_s(G)\leq 4$.

\item [$(ii)$]
\label{res18/7}
If $\Mad(G) < \frac{18}{7}$ and $g(G)\geq 6$, then $\chi_s(G)\leq 5$.

\item [$(iii)$]
\label{res8/3}
If $\Mad(G) < \frac{8}{3}$ and $g(G)\geq 6$, then $\chi_s(G)\leq 6$.
\end{enumerate}
\end{theorem}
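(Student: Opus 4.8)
The plan is to prove all three parts by the discharging method, as is standard for results relating structure to $\Mad(G)$. For each part I would argue by contradiction: suppose $G$ is a minimal counterexample (say, with fewest edges) to the statement. The proof then splits into two phases. In the \emph{structural phase} I would establish that $G$ cannot contain certain configurations --- small-degree vertices arranged in specific local patterns, such as a vertex of degree $1$, two adjacent vertices of degree $2$, long paths of degree-$2$ vertices, or a vertex of degree $2$ adjacent to a vertex of small degree. Each forbidden configuration is handled by deleting or contracting part of it, invoking minimality to star-color the smaller graph, and then showing the coloring extends (or can be modified on a bounded set of vertices to extend) to all of $G$ using only the allotted $k\in\{4,5,6\}$ colors. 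In the \emph{discharging phase} I would assign to each vertex $v$ the charge $\degree(v)-\mathrm{Mad}(G)$ (so the total is negative), redistribute charge by local rules that push charge from higher-degree vertices to nearby degree-$2$ vertices, and then show every vertex ends with nonnegative charge, contradicting the negative total.

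The abstract signals a cleaner route, however: these results "are obtained by proving that such graphs admit a particular decomposition into a forest and some independent sets." So I would actually aim to prove an intermediate decomposition lemma --- something like: if $\mathrm{Mad}(G)<\frac{26}{11}$ then $V(G)$ partitions into $V_F\cup I_1$ where $G[V_F]$ is a forest and $I_1$ is independent (and similarly with two or three independent sets for parts $(ii)$ and $(iii)$, under the girth-$6$ hypothesis). Given such a decomposition, a star coloring is built as follows: first star-color the forest $G[V_F]$ using few colors --- a forest has $\chi_s\le 3$, and with the girth/sparsity control one can do better on the relevant pieces --- then assign each independent set $I_j$ its own fresh color. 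One checks that adding a new color class that is independent to an existing star coloring keeps it a star coloring: a new $2$-colored subgraph involving $I_j$ has all edges incident to the old color class, and since $I_j$ is independent no path of length $3$ can form through two $I_j$-vertices; a short argument (possibly needing a mild extra condition on how $I_j$ attaches, controlled again by $\mathrm{Mad}$ and girth) rules out bicolored $P_4$. Counting: forest contributes $3$ colors for part $(i)$ and $I_1$ contributes $1$, giving $4$; for part $(ii)$, girth $\ge 6$ lets the forest use $3$ and two independent sets give $5$; for part $(iii)$, $3+3=6$. (The exact split of colors between forest and independent sets is what the constants $\frac{26}{11},\frac{18}{7},\frac{8}{3}$ are tuned to deliver.)

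So the real work reduces to the decomposition lemma, and that is where discharging enters. For part $(i)$: assume $G$ is edge-minimal admitting no partition into a forest plus one independent set; show $G$ has no vertex of degree $\le 1$, no two adjacent $2$-vertices, etc.; then discharge with initial charge $\degree(v)-\frac{26}{11}$ and rules sending $\frac{?}{11}$ across specified edges, and derive that every vertex is left with charge $\ge 0$. The girth-$6$ hypothesis in $(ii)$ and $(iii)$ is used to forbid short cycles among low-degree vertices, which both simplifies the reducible-configuration analysis and tightens the discharging. I would organize the paper as: (a) the extension lemma turning a forest-plus-independent-sets decomposition into a star coloring; (b) three decomposition theorems proved by discharging; (c) deduce Theorem~\ref{res}.

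The main obstacle, I expect, is the \emph{reducibility analysis} in the decomposition lemmas --- proving that a minimal counterexample avoids each bad configuration. Deleting a degree-$2$ vertex and re-inserting it into the forest-or-independent-set decomposition is delicate: putting it back in the forest may create a cycle, putting it in the independent set may create an edge, so one must either choose carefully based on the colors/classes of its two neighbors or locally recolor. Getting a sufficient but small list of reducible configurations --- small enough that a feasible set of discharging rules makes every vertex nonnegative under the given $\mathrm{Mad}$ bound --- is the crux, and is presumably why three separate bounds (rather than one) appear. The secondary subtlety is verifying that appending an independent color class to a star coloring never introduces a bicolored $P_4$; this needs the new class to be not just independent but "star-compatible" with the forest coloring, which may force an additional condition on the decomposition that the discharging must also be made to guarantee.
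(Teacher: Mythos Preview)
Your plan matches the paper's approach essentially exactly: a decomposition lemma (forest plus $k-3$ independent sets) proved via reducible configurations and discharging, followed by star $3$-coloring the forest and assigning one fresh color to each independent set. The ``star-compatibility'' condition you correctly anticipate is that each $I_j$ be \emph{$2$-independent} --- any two of its vertices at distance greater than $2$ --- in $G$ (or, for part~$(ii)$, layered: $I_1$ is $2$-independent in $G[F\cup I_1]$ while $I_2$ is $2$-independent in $G$), which immediately rules out any bicolored $P_4$ using the new color.
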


It is well-known that if $G$ is a planar graph, then $\Mad(G) < \frac{2\cdot g(G)}{g(G) - 2}$. Hence, we obtain the following corollary.

\begin{corollary}\label{cor} Let $G$ be a planar graph.
\begin{enumerate}
\item [$(i)$]
\label{cor26/11}
 If $g(G)\ge 13$, then $\chi_s(G)\leq 4$.

\item [$(ii)$]
\label{cor5/2}
 If $g(G)\ge 9$, then $\chi_s(G)\leq 5$.

\item [$(iii)$]
\label{cor8/3}
 If $g(G)\ge 8$, then $\chi_s(G)\leq 6$.
\end{enumerate}
\end{corollary}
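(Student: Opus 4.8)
The plan is to derive Corollary~\ref{cor} directly from Theorem~\ref{res} using the stated bound $\Mad(G) < \frac{2\,g(G)}{g(G)-2}$ for planar graphs. First I would record why this bound holds, since it is the only input beyond Theorem~\ref{res}: for a planar graph $H$ containing at least one cycle, Euler's formula gives $|E(H)| \le \frac{g(H)}{g(H)-2}\bigl(|V(H)|-2\bigr)$; moreover every subgraph $H \subseteq G$ with a cycle satisfies $g(H) \ge g(G)$, and $x \mapsto \frac{x}{x-2}$ is strictly decreasing on $(2,\infty)$, so $\frac{2|E(H)|}{|V(H)|} < \frac{2\,g(G)}{g(G)-2}$. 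For an acyclic $H$ the ratio is below $2 < \frac{2\,g(G)}{g(G)-2}$ anyway. Taking the maximum over all $H \subseteq G$ yields the claimed strict inequality.

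The remaining work is a short monotonicity check. Since $g \mapsto \frac{2g}{g-2}$ is strictly decreasing, it suffices in each case to evaluate it at the threshold girth and verify that the result is at most the $\Mad$-bound appearing in Theorem~\ref{res}. For part $(i)$, at $g = 13$ we have $\frac{2\cdot 13}{13-2} = \frac{26}{11}$, so any planar $G$ with $g(G) \ge 13$ satisfies $\Mad(G) < \frac{26}{11}$, and Theorem~\ref{res}$(i)$ gives $\chi_s(G) \le 4$. For part $(ii)$, at $g = 9$ we have $\frac{2\cdot 9}{9-2} = \frac{18}{7}$, so $g(G) \ge 9$ forces $\Mad(G) < \frac{18}{7}$; since also $9 \ge 6$, Theorem~\ref{res}$(ii)$ applies and $\chi_s(G) \le 5$. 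For part $(iii)$, at $g = 8$ we have $\frac{2\cdot 8}{8-2} = \frac{8}{3}$, so $g(G) \ge 8$ forces $\Mad(G) < \frac{8}{3}$, and as $8 \ge 6$ Theorem~\ref{res}$(iii)$ gives $\chi_s(G) \le 6$.

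There is essentially no obstacle here; the one point deserving attention is that the planar edge bound is \emph{strict}, which is exactly what allows the girth thresholds to be the smallest integers $g$ for which $\frac{2g}{g-2}$ \emph{equals} (rather than lies strictly below) the constants $\frac{26}{11}$, $\frac{18}{7}$, $\frac{8}{3}$; with only a non-strict bound each threshold would rise by one. One should also confirm that the girth hypothesis $g(G) \ge 6$ in Theorem~\ref{res}$(ii)$--$(iii)$ is automatically met, which it is since $9 \ge 6$ and $8 \ge 6$. Thus the corollary follows with no further case analysis.
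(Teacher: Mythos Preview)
Your proposal is correct and follows exactly the paper's approach: the paper simply cites the well-known bound $\Mad(G) < \frac{2g(G)}{g(G)-2}$ for planar $G$ and says the corollary follows, while you additionally justify that bound via Euler's formula and spell out the arithmetic at each threshold. Your extra remarks on strictness and on the girth hypothesis $g(G)\ge 6$ are accurate and worth noting, but the argument is the same.
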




\begin{remark}
Here we comment that in a previous
submitted version, we proved
that if $\Mad(G) < \frac{5}{2}$ then $\chi_s(G)\leq 5$ (instead of
Theorem \ref{res} $(ii)$). The referee
pointed out that Timmons \cite{Tim07} in his masters
thesis proved several related results, including the following theorem:
\begin{theorem}\cite{Tim07} \label{timmonsRes}
Let $G$ be a planar graph.
\begin{enumerate}
\item [$(i)$] If $g(G)\geq 14$, then $\chi_s(G)\leq 4$
\item [$(ii)$] If $g(G)\geq 9$, then $\chi_s(G)\leq 5$
\item [$(iii)$] If $g(G)\geq 8$, then $\chi_s(G)\leq 6$
\end{enumerate}
\end{theorem}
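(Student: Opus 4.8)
\medskip
\noindent\textbf{Plan of proof (Theorem~\ref{timmonsRes}).}
The plan is to deduce Theorem~\ref{timmonsRes} directly from Theorem~\ref{res}, in exactly the same way that Corollary~\ref{cor} is obtained; no fresh combinatorics on star colorings is needed. Every statement in Theorem~\ref{timmonsRes} concerns planar graphs of prescribed girth, so the only work is to convert a lower bound on $g(G)$ into the upper bound on $\Mad(G)$ required by the relevant part of Theorem~\ref{res}.

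First I would invoke the standard estimate quoted just before Corollary~\ref{cor}: for a planar graph $G$, every subgraph $H\subseteq G$ is planar with girth at least $g(G)$, so Euler's formula together with a face--edge incidence count gives $|E(H)|\le\frac{g(G)}{g(G)-2}\bigl(|V(H)|-2\bigr)<\frac{g(G)}{g(G)-2}\,|V(H)|$; dividing and maximising over $H$ yields the \emph{strict} bound $\Mad(G)<\frac{2g(G)}{g(G)-2}$. Strictness is the one technical point worth flagging, since the hypotheses of Theorem~\ref{res} are all strict inequalities, so I will need it to handle the borderline girths.

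Next I would solve the three resulting inequalities, using that $x\mapsto\frac{2x}{x-2}$ is decreasing on $(2,\infty)$. For part~(iii): $\frac{2g}{g-2}\le\frac{8}{3}\iff g\ge 8$, so $g(G)\ge 8$ forces $\Mad(G)<\frac{8}{3}$ and (a fortiori) $g(G)\ge 6$, and Theorem~\ref{res}$(iii)$ gives $\chi_s(G)\le 6$. For part~(ii): $\frac{2g}{g-2}\le\frac{18}{7}\iff g\ge 9$, so $g(G)\ge 9$ gives $\Mad(G)<\frac{18}{7}$ and $g(G)\ge 6$, and Theorem~\ref{res}$(ii)$ gives $\chi_s(G)\le 5$. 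For part~(i): $\frac{2g}{g-2}\le\frac{26}{11}\iff g\ge 13$, so $g(G)\ge 14$ (indeed already $g(G)\ge 13$) gives $\Mad(G)<\frac{26}{11}$, and Theorem~\ref{res}$(i)$ gives $\chi_s(G)\le 4$. Thus parts~(ii) and~(iii) coincide with Corollary~\ref{cor}$(ii)$--$(iii)$, and part~(i) is subsumed by the slightly sharper Corollary~\ref{cor}$(i)$.

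Because all of the depth is already carried by Theorem~\ref{res}, I do not expect any genuine obstacle in this derivation; the only things to watch are the strictness of the Euler bound (so that the borderline values $g=13,9,8$ are admissible), the check that the derived girth thresholds in parts~(ii)--(iii) automatically meet the ``$g(G)\ge 6$'' requirement of Theorem~\ref{res}, and the monotonicity remark that larger girth can only lower $\Mad$. A self-contained argument bypassing Theorem~\ref{res} would instead run a discharging/reducibility analysis on a minimal counterexample of the same flavour as the proof of Theorem~\ref{res}, but in the present setting that detour is unnecessary.
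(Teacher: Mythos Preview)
Your derivation is mathematically correct, but there is nothing to compare it against: the paper does \emph{not} prove Theorem~\ref{timmonsRes}. That theorem is quoted inside Remark~4 as a result of Timmons~\cite{Tim07}, mentioned as prior related work that the referee brought to the authors' attention; the paper gives no argument for it. In fact the logical flow in the paper runs in the opposite direction to yours: the authors say they \emph{adapted Timmons' proof} of Theorem~\ref{timmonsRes}$(ii)$ to obtain the more general Theorem~\ref{res}$(ii)$.

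That said, what you wrote is a perfectly valid way to recover Theorem~\ref{timmonsRes} once Theorem~\ref{res} is in hand, and you correctly observe that it is exactly the mechanism behind Corollary~\ref{cor}: parts~$(ii)$ and~$(iii)$ of Theorem~\ref{timmonsRes} coincide with Corollary~\ref{cor}$(ii)$--$(iii)$, and part~$(i)$ is strictly weaker than Corollary~\ref{cor}$(i)$ (girth~$14$ versus~$13$). Your checks on strictness of the Euler bound, monotonicity of $g\mapsto 2g/(g-2)$, and the auxiliary $g\ge 6$ hypothesis are all in order. There is no circularity in doing this, since the paper's proof of Theorem~\ref{res} is self-contained even though its technique for part~$(ii)$ was inspired by Timmons.
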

and sent us the two manuscripts \cite{Tim07,Tim07b}. The two first items
are submitted for publication \cite{Tim07b}. To improve our
results, we have adapted Timmons' proof of Theorem
\ref{timmonsRes} $(ii)$ to have a more general result, which
avoids the planar constraint.
\end{remark}

\vspace{.25cm}
Our proofs are based on an extension of an observation by Albertson
\emph{et al.} \cite{AC+04} (Lemma 5.1). They noticed that if there exists a 
partition of the vertices of a graph $G$ such that: $V(G) = F \dot{\cup} I$ 
and the following two conditions hold:
\begin{enumerate}
\item[P1.] $G[F]$ is a forest 
\item[P2.] $G[I]$ is an independent set such that for all $x,y \in I$,
  $d_G(x,y) > 2$,
\end{enumerate}
then $G$ has a star 4-coloring; we can construct this coloring by star 3-coloring $G[F]$ with color 1, 2, and 3, (see Observation~\ref{3-color forest})
and coloring $G[I]$ with color 4. 
We extend this idea by using several independent sets satisfying
P2 in a certain sense.

\paragraph{}
In Section \ref{sec26/11} we prove Theorem \ref{res} $(i)$, 
in Section \ref{sec18/7} we prove Theorem \ref{res} $(ii)$, and
in Section \ref{sec8/3} we prove Theorem \ref{res} $(iii)$.  
In Section \ref{secConc}, we offer concluding remarks.


\paragraph{Some notation:} 
We use $V(G)$ and $E(G)$ to denote the vertex set and edge set of 
a graph $G$; we use $\delta(G)$ to denote the minimum
degree of $G$. We write $d(v)$ to denote the degree of a vertex $v$.
We call a vertex of degree $k$ (resp.\ at least $k$, at most $k$) a
\emph{$k$-vertex} (resp.\ \emph{$^\ge k$-vertex}, \emph{$^\le
  k$-vertex}). A \emph{$k$-path} is a path of length $k$ (number of
edges). A \emph{$k$-thread} is a $(k+1)$-path whose $k$ internal
vertices are of degree 2.  A \emph{$k_{i_1,\cdots,i_k}$-vertex} with
$i_1\leq \cdots \leq i_k$ is a $k$-vertex that is the initial vertex
of $k$ threads with $i_1, \cdots, i_k$ internal vertices,
respectively.  Usually the threads beginning at a vertex will be disjoint,
but they need not be; for example, each vertex of an isolated $4$-cycle is a 
$2_{3,3}$-vertex. We address this concern explicitly in Claim 4 of Lemma~6.
Let $S\subset V(G)$; we denote by $G[S]$ the subgraph
of $G$ induced by the vertices of $S$.  Let $x$ and $y$ be two
vertices of $G$; we denote by $d_G(x,y)$ the distance in $G$ between
$x$ and $y$, i.e. the length (number of edges) of a shortest path
between $x$ and $y$ in $G$.  In our diagrams, a vertex is colored
black if all of its incident edges are drawn; a vertex is colored
white if it may have more incident edges than those drawn.  We write
$S = S_1\dot{\cup}S_2\dot{\cup}S_3$ to denote that the $S_i$s
partition the set $S$.

\section{Graphs with $\Mad(G) < \frac{26}{11}$\label{sec26/11}}

In this section, we prove that every graph $G$ with $\Mad(G)< \frac{26}{11}$ 
is star 4-colorable.
Lemma~\ref{lemmaStructure26/11} refers to an independent set $I$ such that for all $x,y\in I$, $d_G(x,y) > 2$; we call such a set {\it 2-independent}.

\begin{lemma}\label{lemmaStructure26/11}  
If $ \Mad(G) < \frac{26}{11}$, 
then there exists a vertex partition $V(G)=I\dot{\cup}F$ such that $F$ induces a forest and $I$ is a 2-independent set.
\end{lemma}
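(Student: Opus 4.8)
The plan is a standard discharging argument via a minimal counterexample. Suppose $G$ is a counterexample minimizing $|V(G)|+|E(G)|$; by minimality $G$ is connected, and every proper subgraph $H$ of $G$ (in particular every subgraph obtained by deleting a vertex or contracting a configuration) satisfies the conclusion, since $\Mad(H)\le\Mad(G)<\frac{26}{11}$. The first phase is to derive a list of \emph{reducible configurations}: local structures that cannot appear in $G$. I would first rule out $1$-vertices (delete the vertex, recover the partition, and place the returned vertex in $F$ or in $I$ as the neighbor allows — a $1$-vertex can always go into $F$, since adding a leaf to a forest keeps it a forest). Next I would show there is no $2$-vertex adjacent to another $2$-vertex, no vertex lying on a long thread, and more delicate configurations such as a $3$-vertex incident to several $2$-threads, or adjacent low-degree vertices whose removal lets one re-insert the deleted vertices safely. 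For each configuration the argument is: delete the offending vertices (or the whole thread), invoke minimality to $2$-independently-and-forest partition the rest, and then extend — a deleted vertex can join $F$ unless doing so creates a cycle, in which case (because the vertex has small degree and its neighbors are constrained) it can instead join $I$, and one must check the distance-$>2$ condition against other $I$-vertices, which is where the girth/sparsity really bites.

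The second phase is the discharging itself. Assign to each vertex $v$ the charge $\mu(v)=d(v)-\frac{26}{11}$. Since $\Mad(G)<\frac{26}{11}$ we have $\sum_{v}d(v)=2|E(G)|<\frac{26}{11}|V(G)|$, so $\sum_v\mu(v)<0$. I would then design discharging rules — typically $2$-vertices (and more generally vertices on threads) pull charge from their nearby high-degree endpoints — and prove that after redistribution every vertex has nonnegative final charge, contradicting the negative sum. Concretely, a $2$-vertex has charge $2-\frac{26}{11}=-\frac{4}{11}$ and needs to receive $\frac{4}{11}$ total; a $\ge 3$-vertex $v$ has charge $d(v)-\frac{26}{11}\ge\frac{7}{11}$, and one checks using the reducibility results (which cap how many threads/low-vertices can hang off $v$) that $v$ can afford to send out what is demanded of it. The choice of the constant $\frac{26}{11}$ is exactly what makes these inequalities tight, so the rules must be tuned so that the worst surviving configuration around each vertex-type balances out to $\ge 0$.

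The main obstacle I expect is the \emph{extension step} inside the reducibility proofs rather than the arithmetic of discharging: when we delete a small configuration and re-insert its vertices, putting a vertex into $F$ may close a cycle, and putting it into $I$ may violate $2$-independence with a pre-existing $I$-vertex at distance $\le 2$. Handling this cleanly requires choosing the configurations carefully — large enough that after deletion we have room to both re-route around cycles and to verify distances, but small enough that they are forced to occur in a graph this sparse. In particular I would need configurations that control not just the deleted vertices but also the colors/classes available at the boundary (e.g. "if both my neighbors landed in $I$, I can safely go to $F$; if one landed in $F$ on a path to the other, I go to $I$ and the girth guarantees no conflicting $I$-vertex nearby"). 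Getting a consistent, finite list of such configurations whose discharging weights sum correctly is the crux; once that list is fixed, the final charge computation is routine case analysis over vertex degrees.
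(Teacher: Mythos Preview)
Your outline is correct in spirit---minimal counterexample, reducible configurations, then discharging---and matches the paper's overall structure. But two concrete points separate your sketch from a working proof.

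First, one of your proposed reductions is wrong: you say you would show ``there is no $2$-vertex adjacent to another $2$-vertex.'' That configuration is \emph{not} reducible here, and the paper does not forbid it. What is forbidden is a $2_{1,1}$-vertex (a $2$-vertex both of whose neighbors are $2$-vertices, i.e.\ a $3$-thread); single $2$-threads survive and must be handled by the discharging. If you try to reduce all adjacent $2$-vertices you will not be able to complete the extension step in general, and if you assume it anyway your discharging will balance at a threshold strictly below $\tfrac{26}{11}$.

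Second, and more seriously, purely local discharging of the kind you describe does not reach $\tfrac{26}{11}$. The paper's list of reducible configurations is: no $2_{1,1}$-vertex, no $3_{1,1,2}$-vertex, no $4_{2,2,2,2}$-vertex, no $3_{0,1,2}$-vertex adjacent to a ``bad'' $3$-vertex (a $3$-vertex on a triangle with two $2$-vertices), and---crucially---a \emph{global} structural claim: a certain subgraph $J$ built from $2_{0,1}$-vertices and $3_{0,1,2}$/$3_{0,2,2}$-vertices has every component a tree or a cycle. This last claim is what lets the discharging use a \emph{bank}: vertices outside $J$ pay $\tfrac{1}{11}$ per adjacent $2_{0,1}$-vertex into the bank, and $3$-vertices of $J$ withdraw $\tfrac{1}{11}$; solvency of the bank follows from the tree/cycle structure of $J$. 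Without this non-local device the $3_{0,2,2}$-vertices cannot be brought up to $\tfrac{26}{11}$. Your sketch anticipates only neighbor-to-neighbor transfers, which is the real gap; the reducibility proof of the $J$-claim (extending a partition across an entire cycle of $J$ in two phases while keeping $F$ ``well-behaved'') is also substantially more delicate than the local extensions you describe.
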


\begin{proof}
Let $G$ be a counterexample with the fewest vertices.  We may assume that $G$ is connected and has no vertex of degree 1, since such a vertex could 
be added to $F$.

We begin by proving five claims about structures that must not appear
in $G$.  In each case, let $H$ be the forbidden structure, let
$G'=G-H$, and let $V(G') = I'\dot{\cup}F'$ be the desired partition of
$V(G')$, which is guaranteed by the minimality of $G$.  We conclude
with a discharging argument, which shows that $G$ must contain at
least one of these forbidden structures.  This contradiction implies
that there is no counterexample, and so the lemma is true.

\vspace{1em}

\noindent
\textbf{Claim 1:} $G$ has no $2_{1,1}$-vertex.

Suppose $x$ is a $2_{1,1}$-vertex in $G$.  Consider the partition for the graph obtained by removing $x$ and its neighbors from $G$. We extend this partition to $G$.  If any of the second neighbors of $x$ are in $I'$, then add $x$ and its neighbors to $F'$.  
Otherwise, add $x$ to $I'$ and add the neighbors of $x$ to $F'$.

\vspace{1em}

\noindent
\textbf{Claim 2:} $G$ has no $3_{1,1,2}$-vertex.

Suppose that $x$ is a $3_{1,1,2}$-vertex; let $y$ be an adjacent $2_{0,1}$-vertex.  Consider the partition for the graph obtained by removing $x$, $y$, and their neighbors from $G$.  If $x$ and $y$ lie on a 3-cycle $xyz$, then we add $y$ to $I'$ and add the other vertices to $F'$; hence we suppose that $x$ and $y$ do not lie on a 3-cycle.  If both of the second neighbors 
of $x$ that are not adjacent to $y$ are in $I'$, then add all the removed vertices 
to $F'$.  If both of these second neighbors are in $F'$, then add $x$ to $I'$ and add all other vertices to $F'$. Thus, we may assume that one second neighbor of $x$ is in $F'$, and the other is in $I'$.  If the second neighbor of $y$ not adjacent to $x$ is in $I'$, then add all vertices to $F'$.
Otherwise, add $y$ to $I'$ and all other vertices to $F'$.  

\vspace{1em}

\noindent
\textbf{Claim 3:}  $G$ has no $4_{2,2,2,2}$-vertex.

Suppose that $x$ is a $4_{2,2,2,2}$-vertex in $G$.  Consider the partition for the graph obtained by removing $x$, the neighbors of $x$, and the second neighbors of $x$. To extend the partition to $G$, add $x$ to $I'$ and add the neighbors and second neighbors of $x$ to $F'$.
\vspace{1em}

A 3-vertex that lies on a 3-cycle with two 2-vertices is troublesome for our discharging argument because the adjacent 2-vertices must receive all their charge from the single 3-vertex; hence, we call such a 3-vertex a {\it bad 3-vertex}.

\vspace{1em}
\noindent
\textbf{Claim 4:} $G$ has no $3_{0,1,2}$-vertex adjacent to a bad 3-vertex.

Suppose that $u$ is a $3_{0,1,2}$-vertex adjacent to a bad 3-vertex $v$, which lies on a 3-cycle with 2-vertices $w$ and $x$.  First observe that $u$ must not also lie on a 3-cycle with two 2-vertices; if $u$ does, then the component has only six vertices, so we can easily find a partition into $I$ and $F$. Consider the partition of $G\setminus\{v,w,x\}$.  If $u\not\in I'$, then we add $v$ and $w$ to $F'$ and we add $x$ to $I'$; hence, we assume $u\in I'$.  Note that in $G\setminus\{v,w,x\}$, vertex $u$ is an internal vertex of a 4-thread and is distance at least two from each end-vertex of the 4-thread.  Let $y$ be a neighbor of $u$ on the 4-thread such that $y$ is also distance at least two from each end-vertex.  Consider the partition of $G\setminus\{v,w,x\}$.  If either end-vertex of the 4-thread is in $I'$, then we can remove $u$ from $I'$, but otherwise we can add $u$ to $F'$ and add $y$ to $I'$; now we proceed as above.

\vspace{1em}

We now show that $G$ cannot have certain types of cycles.  
We want $J$ to be the maximum subgraph of $G$ such that all its vertices either are $2_{0,1}$-vertices or are $3_{0,1,2}$-vertices; further, if $d_G(v)=3$, then we want $d_J(v)\geq 2$.
To form $J$, let $A$ be the set of $2_{0,1}$-vertices and of $3_{0,2,2}$-vertices.  Add to $A$ all $3_{0,1,2}$-vertices that are adjacent to other $3_{0,1,2}$-vertices.  Let $J$ be the subgraph induced by $A$.

\vspace{1em}

\noindent
\textbf{Claim 5:}  
Every component of $J$ is a tree or a cycle.

Suppose $J$ contains a cycle with vertex set $C$ that contains a vertex $r$ with $\degree_{J}(r)=3$.  Choose such a cycle so that $|C|$  is minimal.  
By minimality $C$ induces a chordless cycle in $G$.

Let $V(G-C) = F'\dot{\cup}I'$ be a vertex partition of $G - C$, where we may assume that a 1-vertex in $G-C$ or
a 2-vertex adjacent to a 1-vertex is in $F'$. We extend this partition to all of $G$ in two phases.  
In phase 1, we will form $I''$ and $F''$ by successively adding the vertices of $C$ to $I'$ and $F'$ so that $I''$ is a 2-independent set in $G$, and 
the only possible cycle induced by $F''$ is $C$ itself.  
In phase 2, we modify $I''$ and $F''$, if necessary, so that 
$I'''$ is a 2-independent set and $F'''$ is a acyclic.

Before beginning phase 1, we also need two definitions.
For convenience, we assign a cyclic orientation to $C$. If vertices $u$ and $v$ are in 
$C \cap F''$, then $u$ is a \textit{predecessor} of $v$ if the directed path along $C$ from 
$u$ to $v$ is entirely in $F''$.  We say that $F''$ is {\it well-behaved}, if for all
vertices $w,v$ with $w\in (F'' - C)$, $v\in (F'' \cap C)$, and $vw \in E(G)$, 
we have either that $w$ is a leaf in $F''$ or that $v$ has at most two predecessors, each of which is a 2-vertex in $G$. 
We now show why it is useful to have $F''$ well-behaved.
Aiming for a contradiction, assume $B$ is a cycle in $F''$ in which $B \neq C$.  Let $u$ and $v$ be the endpoints of a maximal component of 
$B \cap C$.  Without loss of generality, assume $u$ is a predecessor of $v$.  
Let $w$ be the neighbor of $v$ not on $C$.  Since $w$ lies on $B$, 
$w$ is not a leaf in $F''$.  Since $F''$ is well-behaved, the two predecessors of $v$ have degree 2 
in $G$.  This contradicts the fact that $\degree_{G}(u)=3$.  
Observe that at the start of phase~1 we have $F''=F'$ and $I''=I'$ so that
$F''$ is trivially well-behaved. We will ensure that $F''$ remains well-behaved throughout phase~1, so that at the end of the phase the only possible cycle induced by $F''$ is $C$.

 \begin{figure}[htbp]
 \begin{center}
 \psfrag{w'}{{\normalsize$w_1$}}
 \psfrag{v'}{{\normalsize$v_1$}}
 \psfrag{s}{{\normalsize$s$}}
 \psfrag{t}{{\normalsize$t$}}
 \psfrag{u}{{\normalsize$u$}}
 \psfrag{v}{{\normalsize$v$}}
 \psfrag{w}{{\normalsize$w$}}
 \psfrag{x}{{\normalsize$x$}}
 \psfrag{y}{{\normalsize$y$}}
 \includegraphics[width=7cm]{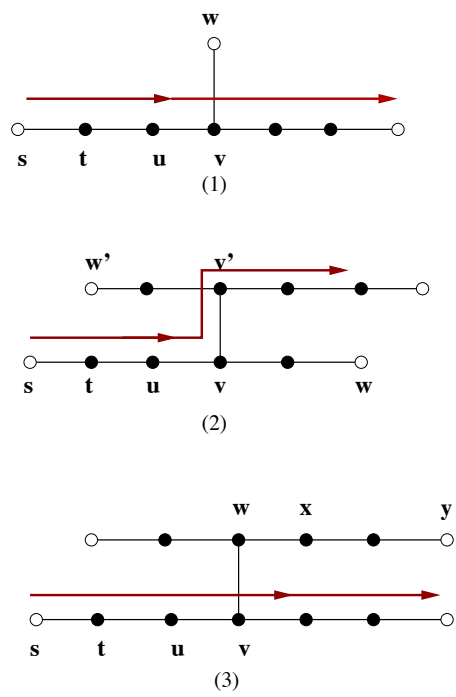}

 \caption{The three key configurations of Claim~5.} \label{fig18/7}
 \end{center}
 \end{figure}

To execute phase 1, observe that the immediate neighborhood (in $G$) of each 
$3_{0,1,1}$-vertex $v$ on $C$ must appear as in Figure~1.1 or Figure~1.2, where the 
arrows indicate $C$; recall that the black vertices in the figures have all neighbors 
shown, but white vertices may have additional neighbors (in $G$). One by one, 
we examine the neighborhood of each 3-vertex 
on $C$ proceeding in the order as we encounter them on $C$, and assign each vertex either to $I''$ or $F''$. If $C$ contains a vertex $v$ with a 3-neighborhood as in Figure~1.2, 
then we begin at such a $v$.  
Otherwise we choose a 3-vertex $v$ as in Figure~1.1 for which, if possible, $w\in I''$.

Suppose first that the 3-neighborhood of $v$ looks like in Figure~1.1.  
If $s$ or $w$ is in $I''$, then add $t,u,v$ to $F''$.  
If $s$ and $w$ are not in $I''$, then add $u$ to $I''$
and $t,v$ to $F''$. This keeps $F''$ well-behaved: 
If $u$ is assigned to $I''$, then $v$ has no predecessors.  If $u$ is not assigned to $I''$, 
then it must be the case that $s$ or $w$ is in $I''$.  If $s\in I''$, then $v$ has two predecessors, both of which are 2-vertices in $G$, whereas if $w\in I''$, 
then $F''$ is trivially well-behaved. 
Suppose then that the neighborhood of $v$ looks like in Figure~1.2.  
If $w\in F''$, then add $v$ to $I''$ and $t,u,v_1$ to $F''$.
If $w\in I''$, and $w_1\in F''$, then add $v_1$ to $I''$, and 
 $t,u,v$ to $F''$. If $w,w_1\in I''$, then add $t,u,v,v_1$ 
 to $F''$. To see that $F''$ is well-behaved, observe that when $w\in F''$, then $v$ is assigned to $I''$, so $v\notin F''$ and $v_1$ has no predecessors.  
 If $w\in I''$, then the neighbor of $v$ not in $C$ is a leaf of $F''$. Furthermore, if also
 $w_1\in I''$, then the neighbor of $v_1$ not in $C$ is a leaf in $F''$, whereas if $w_1\in F''$, then $v_1\notin F''$. This completes phase 1. 

If $F''$ is acyclic, then we have extended the partition to $G$ so we may assume that
$F''$ contains a cycle. Since $F''$ is well-behaved, this cycle must be $C$. We correct this problem in phase 2. 
All of $C$ lies in $F''$ precisely when for each 3-vertex on $C$ as in Figure~1.1 we have 
$w\in I''$, and for each 3-vertex as in Figure~1.2 we have $w,w_1\in I''$.  
To form $I'''$ and $F'''$, recall that $C$ contains a vertex $r$ with $\degree_{J}(r)=3$; 
that is $r$ is adjacent to a $3_{0,1,1}$-vertex and to two $2_{0,1}$-vertices. If $C$ runs through $v=r$ as 
in Figure~1.1, then the  immediate neighborhood of $v$ must look like Figure~1.3.  
In this case we must have $w\in I''$. If also $y\in I''$, then let $I'''=I''-w+u$, whereas if $y\in F''$, then let $I'''=I''-w+u+x$. In the remaining case  $C$ runs through $r$ as 
in Figure~1.2. If $r=v$, then $w$ is a 2-vertex with $w\in I''$, which contradicts our assumption that degree 2 vertices adjacent to leaves in $G-C$ are in $F'$, and thus $F''$. If $r=v_1$,
then we get a similar contradiction for $w_1$. In any case, 
$I'''$ is 2-independent and $F'''=V(G)-I'''$ is now acyclic, finishing the proof of Claim~5.

\vspace{1em}

\paragraph{}

Now we prove that every graph $G$ with minimum degree 2 and $\Mad(G) < \frac{26}{11}$ contains one of the configurations forbidden by Claims 1--5.  
Let $G$ be a counterexample, i.e. a graph with $\delta(G)\geq2$, with $\Mad(G) < \frac{26}{11}$, and containing none of the configurations forbidden by Claims 1--5.

We use a discharging argument with initial charge $\ch(v) = \degree(v)$ at each vertex $v$ and with the following four discharging rules (R1) -- (R4),
where $A$ and $J$ refer to the vertex set and subgraph from Claim~5.  
We write $\nch(v)$ to denote the change at each vertex $v$ after we apply the discharging rules.
If $v$ is a $^{\geq}3$-vertex, then a 2-vertex $w$ is {\it nearby} $v$ if $v$ is adjacent to $w$, or if 
$v$ and $w$ have a common neighbor of degree 2 (in $G$).  

Note that the discharging rules do not change the sum of the charges.
To complete the proof, we show that $\nch(v)\ge
\frac{26}{11}$ for all $v\in V(G)$; this leads to the following obvious
contradiction:
 $$
 \frac{26}{11}\leq \frac{\frac{26}{11}|V(G)|}{|V(G)|} \leq \frac{\sum_{v\in
 V(G)}\nch(v)}{|V(G)|} =  \frac{\sum_{v\in
 V(G)}\ch(v)}{|V(G)|} = \frac{2|E(G)|}{|V(G)|} \le \Mad(G) < \frac{26}{11}.
 $$
Hence no counterexample can exist.

\paragraph{}
The discharging rules are defined as follows. 

\begin{enumerate}

\item[(R1)] Each $^{\geq}3$-vertex gives $\frac{2}{11}$ to each nearby 2-vertex;
each bad 3-vertex gives each nearby 2-vertex an additional $\frac2{11}$, for a total of $\frac4{11}$.

\item[(R2)] Each $3_{0,1,1}$-vertex receives $\frac{1}{11}$ from its neighbor of degree at least 3, unless this neighbor is a bad 3-vertex or is in $A$, in which case no charge is transferred.

\item[(R3)] 
If 
$\degree_{J}(v)=3$, then $v$ gets $\frac{1}{11}$ from the bank.

\item[(R4)] If 
$v\notin A$ and $v$ is adjacent to $k$ $2_{0,1}$-vertices, then $v$ gives $\frac{k}{11}$ to the bank.

\end{enumerate}

We now verify that $\nch(v)\geq \frac{26}{11}$ for each $v\in V(G)$.  
Note that each leaf in $J$ is a $2_{0,1}$-vertex.  Hence, by (R4), the bank receives $\frac1{11}$ for each leaf of $J$.  By (R3), the bank gives away $\frac1{11}$ for each 3-vertex of $J$.
Claim~5 implies that $J$ has average degree at most 2, so $J$ contains at least as many leaves as 3-vertices; hence, the bank has nonnegative charge.

\begin{enumerate}

\item [\emph{Case $d(v)=2$}] By (R1), $v$ receives $\frac2{11}$ from each nearby $^{\geq}3$-vertex, so $\nch(v) = 2 + 2 \cdot \frac{2}{11} = \frac{26}{11}$; if $v$ is adjacent to a bad 3-vertex, then $\nch(v) = 2 + \frac{4}{11} = \frac{26}{11}$.

\item [\emph{Case $d(v)=3$}]
If $v$ is a 3-vertex, then we consider several possibilities:  

If $v$ is a $3_{1,1,1}$-vertex, then by Claim~2, $v$ is not adjacent to a $2_{0,1}$-vertex.  
Hence, $v$ does not send any charge to the bank.  In this case, 
$\nch(v) = 3 - 3 \cdot \frac{2}{11}  = \frac{27}{11}$.  

If $v$ is a bad 3-vertex, then $\nch(v) = 3 - 2\cdot\frac{4}{11} + \frac{1}{11} = \frac{26}{11}$.

If $v$ is a $3_{0,2,2}$-vertex, then $v\in A$.  Let $w$ be the neighbor of $v$ that is not a 2-vertex.  If $vw \in E(J)$, then $\degree_{J}(v)=3$.  
By (R3), $v$ will get $\frac{1}{11}$ from the bank.  By (R1), $v$ sends 
a total of $4 \cdot \frac{2}{11} $ to nearby 2-vertices.  Thus $\nch(v) = 3 + \frac{1}{11} - 4 \cdot \frac{2}{11}  = \frac{26}{11}$.
If $vw \notin E(J)$, then $w\notin A$, so that by (R2), $v$ receives $\frac{1}{11}$ 
from $w$.  Therefore the same calculation applies to $v$.  

If $v$ is a $3_{0,1,2}$-vertex (but not a $3_{0,2,2}$-vertex), then 
$v$ gives $\frac{1}{11}$ to the bank.  Note from Claim 4 that $v$ does not give charge to a bad 3-vertex.  Therefore $\nch(v) \geq 3 - 3 \cdot \frac{2}{11}  - \frac{1}{11} = \frac{26}{11}$.

If $v$ is a $3_{0,1,1}$-vertex (but not a $3_{0,1,2}$-vertex), then $v$ may give charge $\frac{1}{11}$ to a bad 3-vertex.  So $\nch(v)\geq 3 - 2\cdot \frac{2}{11} - \frac{1}{11} = \frac{28}{11}$.

If $v$ is a $3$-vertex adjacent to at most one 2-vertex, then $\nch(v) \geq 3 - 2 \cdot \frac{2}{11}  - 2 \cdot \frac{1}{11}  - \frac{1}{11} = \frac{26}{11}$.

\item [\emph{Case $d(v)=4$}]
If $v$ is a 4-vertex, then by Claim~3, $v$ is adjacent to at most three $2_{0,1}$-vertices, so $\nch(v) \geq 4 - 7 \cdot \frac{2}{11}  - 3 \cdot \frac{1}{11}  = \frac{27}{11}$.  

\item [\emph{Case $d(v)\geq5$}]
If $v$ is a $^{\geq}5$-vertex, then $\nch(v) \geq \degree(v)  - \frac{4}{11} \degree(v) - \frac{1}{11} \degree(v) = \frac{6}{11} \degree(v) \geq \frac{30}{11}$.
\end{enumerate}

This implies that $\Mad(G)$ $\geq \frac{26}{11}$, which provides the needed contradiction.
\end{proof}       
\begin{observation}
\label{3-color forest}
If $F$ is a forest, then $\chi_s(G)\leq 3$.
\end{observation}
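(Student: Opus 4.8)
The plan is to establish the (evidently intended) statement that every forest $F$ satisfies $\chi_s(F)\le 3$. Isolated vertices and edges are trivial, and distinct components may be coloured independently, so it suffices to produce a star $3$-colouring of an arbitrary tree $T$. First I would root $T$ at any vertex $r$ and colour the vertices by induction on their depth (distance from $r$). I would also recall the standard characterization used throughout this area: a proper colouring is a star colouring if and only if no path on four vertices is $2$-coloured. Thus, beyond properness, the only thing to rule out is a bicoloured $P_4$.

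The colouring rule is then short: set $c(r)=1$, and for each non-root vertex $v$ with parent $p$ and, when it exists, grandparent $g$, let $c(v)$ be any colour in $\{1,2,3\}\setminus\{c(p),c(g)\}$. At most two colours are excluded, so this set is nonempty and the rule is well defined; since $c(v)\neq c(p)$, the colouring is proper. To finish, I would check there is no bicoloured $P_4$. Suppose $v_1v_2v_3v_4$ is a path with $c(v_1)=c(v_3)=a$ and $c(v_2)=c(v_4)=b$, $a\neq b$, and let the \emph{apex} of this path be its vertex of least depth in $T$ (the least common ancestor of its endpoints), which is one of $v_1,v_2,v_3,v_4$. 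A short case analysis on the location of the apex gives a contradiction each time: if the apex is $v_1$, then $v_4$ has parent $v_3$ and grandparent $v_2$, so the rule forbids $c(v_4)\in\{c(v_3),c(v_2)\}=\{a,b\}$, contradicting $c(v_4)=b$; if the apex is $v_4$, the symmetric argument applied to $v_1$ (parent $v_2$, grandparent $v_3$) works; if the apex is $v_2$, then $v_1,v_3$ are children of $v_2$ and $v_4$ is a child of $v_3$, so again $v_4$ has parent $v_3$ and grandparent $v_2$ and the same contradiction follows; and the apex $=v_3$ case is symmetric, this time using that $v_1$ has grandparent $v_3$.

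I do not expect a genuine obstacle. The only point needing care is the apex case analysis: one must confirm that for each position of the apex the endpoint singled out ($v_4$ or $v_1$) truly has \emph{both} its parent and its grandparent lying on the path $v_1v_2v_3v_4$, so that the colouring rule is applicable and produces the contradiction. The degenerate situations — the root, the children of the root (where no grandparent exists and so only $c(p)$ is forbidden), and one-vertex components — are handled immediately by the rule, and applying the construction to every component yields $\chi_s(F)\le 3$.
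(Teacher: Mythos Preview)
Your argument is correct and follows essentially the same approach as the paper: root each tree and colour so that every vertex avoids the colours of its parent and grandparent. The paper carries this out with the explicit rule $c(v)=(1+d_T(r,v))\bmod 3$ and omits the verification, whereas you use a greedy choice and supply the $P_4$ case analysis; the underlying idea is the same.
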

In each component $T$ of $F$, arbitrarily choose a root $r$; color each vertex $v$ of $T$ with color $(1 + d_T(r,v))\pmod 3$.
\paragraph{}{\ } 

\par \noindent \textbf{Proof of Theorem \ref{res} $(i)$} \\ Let $G$ be a
graph with $\Mad(G)<\frac{26}{11}$. By Lemma
\ref{lemmaStructure26/11}, there exists a vertex partition
$V(G)=F\dot{\cup} I$, where $F$ induces a forest and $I$ is a
2-independent set.
 
Now we color:
\begin{itemize}
\item $F$ with colors 1, 2, and 3 (as in Observation~\ref{3-color forest}), and
\item $I$ with color 4. 
\end{itemize}
This produces a star 4-coloring of $G$.  \hfill{$\Box$}

\section{Graphs with $\Mad(G) < \frac{18}{7}$ and $g(G)\geq 6$ \label{sec18/7}}

In this section, we prove that every graph $G$ with $\Mad(G)<
\frac{18}{7}$ and $g(G)\geq 6$ is star 5-colorable.  The proofs in
this section is based on Timmons' proofs \cite{Tim07,Tim07b}.

\bigskip

\noindent The proofs in this section and the next section closely resemble thos
in Section 2. The main difference is that in both this section and the
next, we split the argument inti two lemmas, rather than one.

\bigskip

\noindent In this section, we first prove that each graph of interest
must contain one of 14 configurations; we use this lemma to prove a
structural decomposition result, which in turns leads to the star
coloring result. The outline of Section~\ref{sec8/3} is similar.

\begin{lemma}\label{lemmaStructure18/7}
 A graph $G$ with $\Mad(G) < \frac{18}{7}$ 
contains one of the following 14 
 configurations:
 \begin{enumerate}
 \item A $^\leq 1$-vertex.
 \item A $2_{1,1}$-vertex.
 \item A $3_{1,1,1}$-vertex.
 \item A $3_{0,0,2}$-vertex.
 \item A $3_{0,1,1}$-vertex adjacent to a $3_{0,1,1}$-vertex. 
 \item A $3_{0,0,1}$-vertex adjacent to two $3_{0,1,1}$-vertex.
 \item A $4_{1,1,1,2}$-vertex.
 \item A $4_{0,2,2,2}$-vertex adjacent to a 3-vertex.
 \item A $4_{0,1,1,1}$-vertex adjacent to a $3_{0,1,1}$-vertex.
 \item A $4_{0,1,1,1}$-vertex adjacent to a $4_{0,2,2,2}$-vertex.
 \item A $4_{0,0,2,2}$-vertex adjacent to two $4_{0,2,2,2}$-vertex.
 \item A $4_{0,0,2,2}$-vertex adjacent to a $4_{0,2,2,2}$-vertex and a $3_{0,1,1}$-vertex.
 \item A $5_{1,2,2,2,2}$-vertex.
 \item A $5_{0,2,2,2,2}$-vertex adjacent to a $4_{0,2,2,2}$-vertex.


 \end{enumerate}
 \end{lemma}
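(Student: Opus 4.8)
The plan is to argue by minimal counterexample and discharging, mirroring the structure of the proof of Lemma~\ref{lemmaStructure26/11}. Suppose $G$ is a graph with $\Mad(G)<\frac{18}{7}$ that contains none of the 14 configurations; since $\Mad$ is monotone under taking subgraphs, we may pass to a subgraph of minimum average degree realizing the bound and assume $\delta(G)\ge 2$ (configuration 1 is excluded). Assign each vertex $v$ initial charge $\ch(v)=\degree(v)$, so that $\sum_v \ch(v)=2|E(G)|<\frac{18}{7}|V(G)|$. The goal is to redistribute charge so that every vertex ends with charge at least $\frac{18}{7}$, contradicting this inequality. Since $\frac{18}{7}=2+\frac{4}{7}$, the natural unit of charge is $\frac{1}{7}$: a $2$-vertex needs to gain $\frac{4}{7}$, a $3$-vertex has a surplus of $3-\frac{18}{7}=\frac{3}{7}$ to give away, a $4$-vertex has surplus $\frac{10}{7}$, and a $^{\ge5}$-vertex has large surplus.

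The key design decision is the discharging rules. Because the girth is at least $6$, there are no short cycles, so every internal vertex of a thread sees two distinct endpoints; the $1$-vertices and $2$-vertices (in threads) are the charge sinks. I would have each $^{\ge3}$-vertex send $\frac{2}{7}$ across each incident edge of a thread toward the $2$-vertices on it — so each internal $2$-vertex of a thread collects $\frac{2}{7}$ from each side, reaching exactly $\frac{26}{7}$... wait, $2+2\cdot\frac{2}{7}=\frac{18}{7}$, good. Threads of length $\ge 3$ would need charge relayed along the thread, but configurations 2, 3, 4, 7, 13 bound how many long threads or how long the threads at low-degree vertices can be, so in fact no $2$-vertex lies on a $3$-thread or longer at a $^{\le 4}$-vertex after excluding these. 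One then checks each vertex class: $2$-vertices are fine by the rule; $3$-vertices of type $3_{1,1,1}$, $3_{0,0,2}$, $3_{0,1,1}$ adjacent to another such vertex, etc.\ are all forbidden, so a surviving $3$-vertex sends charge to at most some controlled number of nearby $2$-vertices and the surplus $\frac{3}{7}$ suffices, possibly after a secondary rule handling $3_{0,1,1}$-vertices (receiving $\frac17$ from a high-degree neighbor, as configurations 5, 6, 9 forbid the bad neighbor patterns). For $4$-vertices, configurations 7--12 restrict the thread-profile, and for $5$-vertices, configurations 13--14 do the same; in each case the surplus covers the outgoing charge. I expect a "bank" device as in Lemma~\ref{lemmaStructure26/11} may be needed to handle $3_{0,1,1}$-vertices and $4_{0,2,2,2}$-vertices symmetrically, with configurations 5, 6, 8, 10, 11, 12, 14 guaranteeing the bank stays nonnegative via a degeneracy-type count on an auxiliary subgraph $J$ of the "borderline" vertices.

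The main obstacle is bookkeeping: getting a single consistent set of discharging rules under which all fourteen exclusions are exactly what is needed, with no slack wasted and none missing. In particular, the interaction between $3_{0,1,1}$-vertices and $4_{0,2,2,2}$-vertices — both of which are "tight" (they end at exactly $\frac{18}{7}$ only after receiving help) — is delicate, and the auxiliary-graph/bank argument for them (analogous to Claim~5 and rules R3--R4 of the previous lemma) is where the real work lies: one must verify that the subgraph of tight vertices has average degree at most $2$ using the adjacency restrictions in configurations 5, 6, 8, 10, 11, 12, 14, and that its leaves are all genuine $2_{0,1}$- or thread-endpoint vertices supplying the needed deposits. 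Once the rules are fixed, the per-vertex verification is a finite case check of the form $\nch(v)\ge\frac{18}{7}$ exactly as displayed in the proof above, and the contradiction $\frac{18}{7}\le\Mad(G)<\frac{18}{7}$ closes the argument.
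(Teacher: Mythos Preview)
Your discharging framework is correct and matches the paper's: initial charge $\ch(v)=\degree(v)$, target $\frac{18}{7}$, unit $\frac{1}{7}$, and the identification of $3_{0,1,1}$- and $4_{0,2,2,2}$-vertices as the ``tight'' vertices that must receive help. Where you diverge is in anticipating a bank and an auxiliary subgraph $J$ as in Lemma~\ref{lemmaStructure26/11}; the paper needs neither. It uses four direct rules: every $^{\ge 3}$-vertex gives $\frac{2}{7}$ to each adjacent $2$-vertex with no $2$-neighbor, $\frac{4}{7}$ to each adjacent $2_{0,1}$-vertex, $\frac{1}{7}$ to each adjacent $3_{0,1,1}$-vertex, and $\frac{2}{7}$ to each adjacent $4_{0,2,2,2}$-vertex. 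The point is that the list of forbidden configurations already contains the pairwise adjacency restrictions among tight vertices (configurations 5, 8, 9, 10, 11, 12, 14), so the donor to a $3_{0,1,1}$- or $4_{0,2,2,2}$-vertex is guaranteed to be a vertex with enough slack, and a flat per-degree case check suffices. The ``real work'' you locate in a degeneracy/bank argument simply does not arise here: unlike in Lemma~\ref{lemmaStructure26/11}, the reducible configurations are numerous enough that no global averaging over an auxiliary graph is required. Your plan would likely still go through, but it builds machinery the problem does not demand.
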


\begin{proof}
 Let $G$ be a counterexample, i.e.\ a graph with $\Mad(G) < \frac{18}7$, with $g(G)\geq 6$, and containing none of the
 configurations of Lemma \ref{lemmaStructure18/7}. 

We use a discharging argument with initial charge $\ch(v)=d(v)$ at each vertex vertex $v$, and with the following four discharging rules (R1) -- (R4), 
which describe how to redistribute the charge.
We write $\nch(v)$ to denote the charge at each vertex $v$ after we apply the discharging rules. 
Note that the discharging rules do not change the sum of the charges.
To complete the proof, we show that $\nch(v)\ge
\frac{18}{7}$ for all $v\in V(G)$; this leads to the following obvious
contradiction:
 $$
 \frac{18}{7}\leq \frac{\frac{18}{7}|V(G)|}{|V(G)|} \leq \frac{\sum_{v\in
 V(G)}\nch(v)}{|V(G)|} =  \frac{\sum_{v\in
 V(G)}\ch(v)}{|V(G)|} = \frac{2|E(G)|}{|V(G)|} \le \Mad(G) < \frac{18}{7}.
 $$
Hence no counterexample can exist.

\paragraph{}
The discharging rules are defined as follows. 

Each $^\geq 3$-vertex gives: 
 \begin{enumerate}
 \item[(R1)] $\frac{2}{7}$ to each 2-vertex that is not adjacent to
   a 2-vertex.
 \item[(R2)] $\frac{4}{7}$ to each 2-vertex $u$ that is adjacent to
   a 2-vertex ($u$ is a $2_{0,1}$-vertex).
 \item[(R3)] $\frac{1}{7}$ to each $3_{0,1,1}$-vertex.
 \item[(R4)] $\frac{2}{7}$ to each $4_{0,2,2,2}$-vertex.


 \end{enumerate}

To complete the proof, it suffices to verify that 
$\nch(v) \geq\frac{18}{7}$ for all $v\in V(G)$.

\begin{enumerate}

\item [\emph{Case $d(v)=2$}] If $v$ is a $2_{0,1}$-vertex, then $v$
  receives $\frac{4}{7}$ from its neighbor of degree at least
  3. Hence, $\nch(v) = 2 + \frac{4}{7}=\frac{18}{7}$. Otherwise, $v$
  is adjacent to two $^\geq 3$-vertices, which each give $\frac{2}{7}$
  to $v$. Hence $\nch(v) = 2 +2\cdot  \frac{2}{7}=\frac{18}{7}$.

\item [\emph{Case $d(v)=3$}] By Lemma~\ref{lemmaStructure18/7}.4, 
  $v$ is not incident to a
  2-thread. 

  If $v$ is a $3_{0,1,1}$-vertex, then $v$ gives
  $\frac{2}{7}$ to each adjacent 2-vertex and receives $\frac{1}{7}$
  from its third neighbor (since this third neighbor is neither a
  2-vertex, nor a $3_{0,1,1}$-vertex, nor a $4_{0,2,2,2}$-vertex). Hence,
  $\nch(v) = 3 - 2\cdot \frac{2}{7} + \frac{1}{7}= \frac{18}{7}$.

  If $v$ is a $3_{0,0,1}$-vertex, then $v$ is adjacent to
  at most one $3_{0,1,1}$-vertex and to no
  $4_{0,2,2,2}$-vertex. Hence, $\nch(v) \geq 3 - \frac{2}{7} -
  \frac{1}{7} = \frac{18}{7}$.

  Finally, assume that $v$ is not adjacent to a 2-vertex. Since
  $v$ is not adjacent to a $4_{0,2,2,2}$-vertex, $\nch(v)\geq 3
  - 3\cdot \frac{1}{7} = \frac{18}{7}$. 

\item [\emph{Case $d(v)=4$}] By Lemma~\ref{lemmaStructure18/7}.8, 
  $v$ is incident to at most three
  2-threads. 

  If $v$ is incident to exactly three 2-threads, then the fourth
  neighbor of $v$ is neither a 2-vertex, nor a $3_{0,1,1}$-vertex,
  nor a $4_{0,2,2,2}$-vertex; hence, this fourth neighbor gives $\frac{2}{7}$ to $v$. 
  Thus $\nch(v)= 4- 3\cdot \frac{4}{7} + \frac{2}{7} = \frac{18}{7}$.

  Assume that $v$ is incident to exactly two 2-threads. 
  If the third (or fourth) neighbor of $v$ is a 2-vertex or a $4_{0,2,2,2}$-vertex, then the final neighbor of $v$ does not receive charge from $v$ (by Lemmas~\ref{lemmaStructure18/7}.7~and~\ref{lemmaStructure18/7}.9--\ref{lemmaStructure18/7}.12).  In this case, 
  $\nch(v) = 4 - 2 \cdot \frac{4}{7} - \frac{2}{7} = \frac{18}{7}$. 
  It is also possible that one or both of the remaining neighbors of $v$ are $3_{0,1,1}$-vertices.
In this case, $\nch(v) \geq 4 - 2 \cdot \frac{4}{7} - 2 \cdot \frac{1}{7} = \frac{18}{7}$.

  If $v$ is incident to at most one 2-thread, then $\nch(v)
  \geq 4 - \frac{4}{7} - 3\cdot \frac{2}{7} = \frac{18}{7}$.

\item [\emph{Case $d(v)= 5$}] By Lemma~\ref{lemmaStructure18/7}.13, $v$ is incident to at most four 2-threads. 

  If $v$ is incident to exactly four 2-threads, then
  its fifth neighbor is neither a 2-vertex, nor a
  $4_{0,2,2,2}$-vertex. Hence, $\nch(v) \geq  5 - 4\cdot\frac{4}{7} -
  \frac{1}{7}= \frac{18}{7}$. 

  If $v$ is incident to at most three 2-threads, then
  $\nch(v) \geq 5 - 3\cdot \frac{4}{7} - 2\cdot \frac{2}{7} = \frac{19}{7}$.

\item [\emph{Case $d(v)\ge 6$}] Trivially, $\nch(v) \geq d(v) - d(v)\cdot
  \frac{4}{7} = \frac{3}{7}d(v) \geq \frac{18}{7}$.

\end{enumerate}

This implies that $\Mad(G)$ $\geq \frac{18}{7}$, which provides the needed contradiction.
\end{proof}

\begin{lemma}\label{structure18/7}
If $G$ is a graph with  $\Mad(G) < \frac{18}{7}$ and $g(G)\ge 6$, then there exists a vertex partition $V(G) = F \dot{\cup} I_1 \dot{\cup} I_2$ such that:
\begin{enumerate}
\item[P1.] $F$ induces a forest, 
\item[P2.] $I_1$ is an independent set such that
  for all $x,y \in I_1$, $d_{G[F\cup I_1]}(x,y)> 2$, and
\item[P3.] $I_2$  is an independent set such that
  for all $x,y \in I_2$, $d_{G}(x,y)> 2$.
\end{enumerate}
\end{lemma}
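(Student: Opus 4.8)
The plan is to prove Lemma~\ref{structure18/7} by induction on $|V(G)|$, taking a minimal counterexample $G$ and using Lemma~\ref{lemmaStructure18/7} to locate one of the 14 reducible configurations. For each configuration $H$, delete an appropriate set of vertices to get a smaller graph $G'$ (which still has $\Mad(G')<\frac{18}{7}$ and $g(G')\ge 6$), obtain by minimality a partition $V(G')=F'\dot\cup I_1'\dot\cup I_2'$, and then show how to place the deleted vertices back into one of the three parts while preserving P1, P2, P3. The governing constraints to watch are: a vertex may go into $F$ freely as long as it does not close a cycle in $G[F]$; a vertex may go into $I_1$ only if it is nonadjacent to $I_1$ and has no $I_1$-vertex within distance 2 \emph{in $G[F\cup I_1]$} (so paths through $I_2$ don't count, which gives extra slack); and a vertex may go into $I_2$ only if no other $I_2$-vertex lies within distance 2 in all of $G$.

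The key steps, carried out configuration by configuration, are as follows. For the "pendant-like'' configurations (a $\le 1$-vertex, a $2_{1,1}$-vertex, a $3_{1,1,1}$-vertex, a $3_{0,0,2}$-vertex) one deletes the low-degree vertex together with the internal vertices of its incident threads; the deleted degree-2 vertices always have a neighbor placed in $F$, so they can be absorbed into $F$ (a leaf added to a forest keeps it a forest), and a central vertex can be thrown into $I_2$ or $F'$ after a short case check on where its second neighbors landed. For configurations 5, 6, 9, 11, 12 (a $3_{0,1,1}$- or a $4$-vertex adjacent to other special vertices) the idea is the same: remove the threads' interior 2-vertices and possibly one of the special vertices, reinsert the 2-vertices into $F$, and use the distance-in-$G[F\cup I_1]$ relaxation to make room for one of the $3_{0,1,1}$-type vertices in $I_1$ when both its relevant second neighbors are outside $I_1$. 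The $4_{0,2,2,2}$- and $5_{(0,)2,2,2,2}$-vertices (configurations 8, 10, 13, 14) are handled by deleting the high-degree vertex, all of its neighbors, and all of its second neighbors, then putting the high-degree vertex into $I_2$ (it is now far from every surviving $I_2$-vertex) and all the deleted 1-neighbors and 2-neighbors into $F$; the adjacent special vertex prescribed by the configuration is what guarantees enough vertices get removed so that the returned vertex's neighborhood is "clean.''

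The main obstacle — and the place the argument needs the most care — is maintaining P1 (acyclicity of $G[F]$) while simultaneously meeting the distance conditions P2 and P3. In several cases, the only safe place for a deleted 2-vertex is $F$, but when two such vertices on a common short cycle of $G$ are both forced into $F'$, together with their already-placed neighbors they could close a cycle; one must argue, using $g(G)\ge 6$, that the relevant cycle is long enough that at least one internal vertex can instead be diverted to $I_1$ or $I_2$ without violating the distance conditions (this is exactly the kind of subtlety that made Claim~5 of Lemma~\ref{lemmaStructure26/11} delicate). A secondary subtlety is checking, whenever a vertex is inserted into $I_1$, that no \emph{new} short $G[F\cup I_1]$-path between two old $I_1$-vertices is created by simultaneously moving a formerly-$I_2$ vertex into $F$; the cleanest way to dodge this is to never demote $I_2$ vertices, only ever adding deleted vertices, so that $G[F\cup I_1]$ only grows by the handful of new vertices whose neighborhoods we have explicitly inspected. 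I would write the bulk of the proof as a uniform template ("delete, recurse, reinsert'') and then dispatch the 14 configurations, spending real words only on configurations 5, 6, 8, 11, 12, where the cycle-avoidance bookkeeping bites.
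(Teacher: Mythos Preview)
Your overall strategy matches the paper's exactly: take a minimal counterexample, invoke Lemma~\ref{lemmaStructure18/7} to find one of the 14 configurations, delete a small set of vertices, partition the rest by minimality, and reinsert. Two points where your sketch diverges from what actually works, though.

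First, your recipe for configurations 8, 10, 13, 14---delete the high-degree vertex together with \emph{all} its first and second neighbors, then place those neighbors into $F$---breaks whenever a second neighbor is a white vertex of uncontrolled degree. In configuration~8 the $4_{0,2,2,2}$-vertex's second neighbors through the adjacent 3-vertex are white; in configuration~13 the second neighbor along the lone 1-thread is white. Removing such a vertex and then blindly inserting it into $F$ can close a cycle through its \emph{other}, already-partitioned, $F'$-neighbors---exactly the failure mode you warn about, but created by your own deletion rule. The paper never deletes white vertices: it removes only the black set $N$ and then does a short case split on where the adjacent white vertices landed in $F',I_1',I_2'$, choosing which central black vertex to divert into $I_1$ or $I_2$ accordingly (e.g.\ in configuration~13 one simply puts $u$ into whichever $I_i$ avoids the white endpoint $y_5$).

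Second, and relatedly, your cycle-avoidance worry is overstated. With girth at least~6 the black vertices of each configuration already induce a tree (the paper's Remark after the proof says this is the \emph{only} use of the girth hypothesis), so the reinserted $F$-vertices form a subtree attached to $F'$ only at white vertices. The case structure ``if all white attachment points lie in $F'$, put a central black vertex into some $I_i$; otherwise add everything to $F'$'' then automatically prevents closing a cycle. There is no analogue here of the delicate Claim~5 argument from Lemma~\ref{lemmaStructure26/11}; each of the 14 cases is a two- or three-line check once you delete only black vertices.
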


 \begin{figure}[htbp]
 \begin{center}
\hspace{-2.5cm}
 \includegraphics[width=12cm]{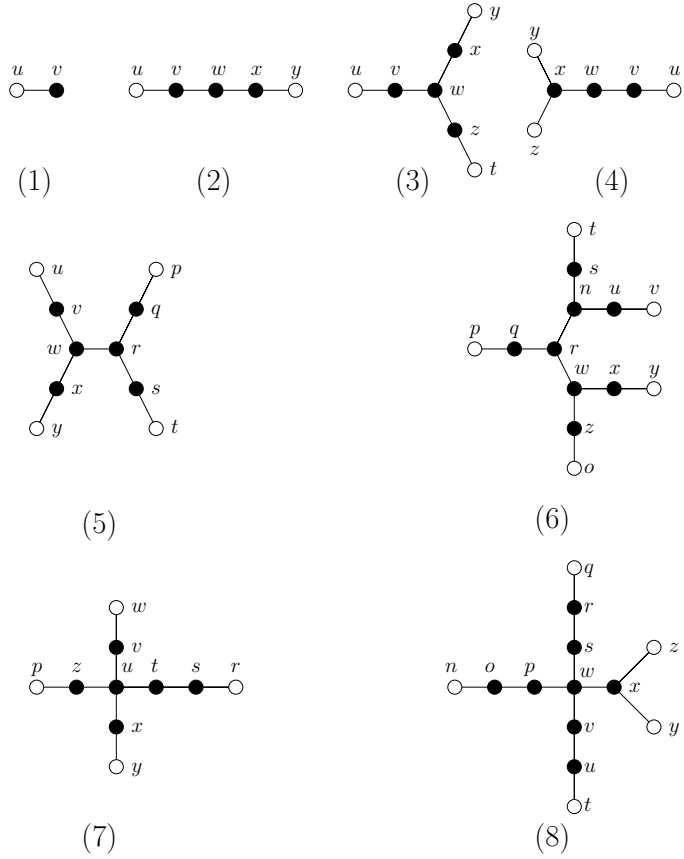}
 \caption{(1 of 2) The first 8 of the 14 unavoidable configurations in Lemma~\ref{lemmaStructure18/7}.} \label{fig18/7}
 \end{center}
 \end{figure}

\begin{figure}[htbp]
 \begin{center}
\hspace{-2.5cm}
 \includegraphics[width=12cm]{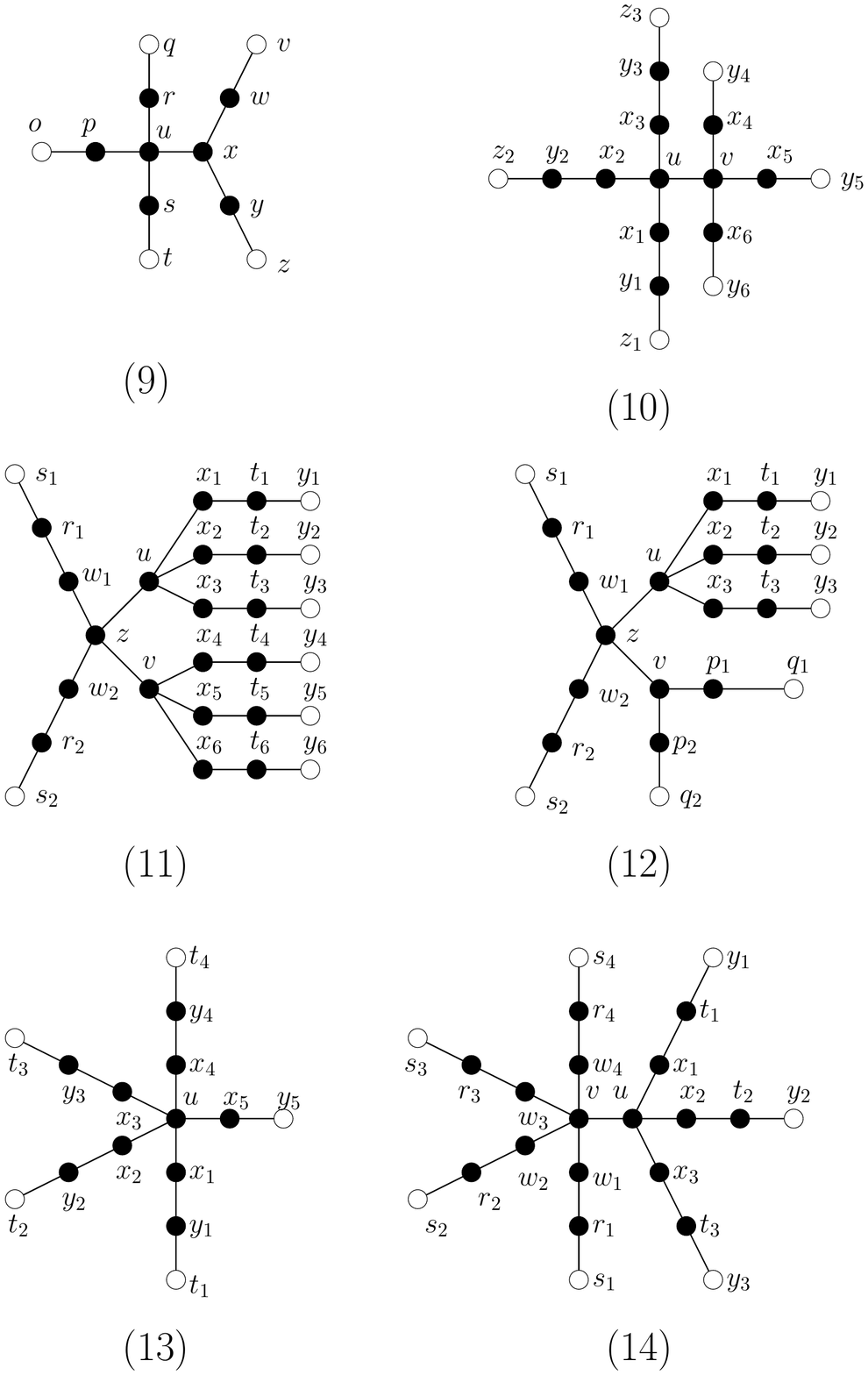}
 \caption{(2 of 2) The last 6 of the 14 unavoidable configurations in Lemma~\ref{lemmaStructure18/7}.} \label{fig18/7bis}
 \end{center}
 \end{figure}

\begin{proof}
Let $G$ be a counterexample with the fewest number of vertices. By Lemma
\ref{lemmaStructure18/7},  $G$ must contain one of the 14
configurations. Using the fact that $g\geq 6$ and we have
$2_{1,1}$-vertex being forbidden, it follows that these configurations
look like in Figures~\ref{fig18/7} and~\ref{fig18/7bis} (where some of
the white vertices may represent the same vertex). 

In each configuration,
let $N$ denote the set of black vertices. 
In each case, we 
delete a subgraph $H$ and partition $V(G-H)$ into sets $F'$, $I_1'$, and $I_2'$ that satisfy $P_1$, $P_2$, and $P_3$.
We now show how to add the deleted vertices to $F'$, $I_1'$, and $I_2'$ so that the resulting sets still satisfy $P_1$, $P_2$, and $P_3$.

We consider each of the 14 configurations in Figures~\ref{fig18/7} and~\ref{fig18/7bis}.
\begin{enumerate}
\item [(1)] 
Let $G' = G-v$. By the minimality of $G$, the vertex set of 
$G'$ is the disjoint union of sets $F'$, $I_1'$, and $I_2'$  that satisfy P1, P2, and P3.
Now add $v$ to $F'$.

\item [(2)] 
Let $G'= G\setminus N$.
  If $u,y \in F'$, then add $v$ and $x$ to $F'$ and add $w$ to $I_1'$.
Otherwise, 
add $v$, $w$, $x$ to $F'$.

\item [(3)] 
Let $G'=G\setminus N$.
  If $u,y,t \in F'$, then 
  add $v$, $x$, $z$ to $F'$ and add $w$ to $I_1'$.
If exactly two of the vertices $u$, $y$, $t$ are in $F'$, then w.l.o.g.\
  $u,y\in F'$ and $t \in I_1$ (resp. $I_2$); 
add $v$, $x$, $z$ to $F'$ and add $w$ to $I_2'$ (resp. $I_1'$).
If at most one of the vertices $u$, $y$, $t$ is in $F'$, then 
add $v$, $w$, $x$, $z$ to $F'$.
\item [(4)] 
Let $G' = G\setminus\{v,w\}$.
Consider the case $u,x \in F'$. If
$y,z\in I_1'\cup I_2'$
then
add $v$ and $w$ to $F'$.
Otherwise: 
if $y,z\in I_2'\cup F'$, then add $v$ to $F'$ and $w$ to $I_1'$;
similarly, 
if $y,z\in I_1'\cup F'$, then add $v$ to $F'$ and add $w$ to $I_2'$.
Finally, if $u\notin F'$ or $x\notin F'$, then 
add $v$ and $w$ to $F'$.

\item [(5)] 
Let $G'=G\setminus N$. If $u,p,y,t\in F'$, then 
add $v$, $q$, $x$, $s$ to $F'$, add $w$ to $I_1'$, and add $r$ to $I_2'$.
If exactly three of $u$, $p$, $y$, $t$ are in $F'$, then W.l.o.g., $u,p,y\in F'$ and $t\in I_1'$; add $w$ to $I_1'$ and $v$, $p$, $r$, $s$, $x$ to $F'$.
The remaining cases are easier and are left to the reader.

\item [(6)(7)(9)] These easy cases are left to the reader.

\item [(8)] Let $G'=G\setminus \{o,p,r,s,u,v,w\}$. We consider the
  different cases according to $x$. If $x$ belongs to $I_2$, we put
  $w$ in $I_1$ and the remaining vertices in $F$. Suppose now that $x$
  is in $F$. If an independent set $I_i$ does not appear for $x$ and
  $y$, we put $w$ in $I_i$ and the remaining vertices in $F$. So
  assume that w.l.o.g. $y$ is in $I_1$ and $z$ is in $I_2$. We put $s$
  and $p$ if necessary in a different independent set according to $q$
  and $n$, and the remaining vertices in $F$. Finally suppose that $x$
  is in $I_1$. If $y$ and $z$ are not in $I_2$, we put $w$ in $I_2$
  and the remaining vertices in $F$. Otherwise, we change $x$ and put
  it in $F$ and obtain a previous case.

\item [(10)] 
Let $G'=G\setminus N$. We consider
  two cases: (i) If at least two of $y_4,y_5,y_6$ are in independent
  sets, then add $u$ to an independent set and add the other vertices
  to the forest; (ii) If at most one of $y_4,y_5,y_6$ is in an
  independent set, then add $u$ to that set and $v$ to the other 
  independent set and add all the other vertices to the forest.

\item [(11)] 
Let $G'=G\setminus N$. Add $u$ and $v$ to
  $I_1$, add $z$ to $I_2$,  and add all the other vertices to the forest.

\item [(12)] 
Let $G'=G\setminus N$. We just
  consider the case where $q_1$ and $q_2$ are in $F'$; the remaining
  cases are easier. Add $u$ and $v$ to $I_1$, add $z$ to $I_2$, and add the
  other vertices to the forest.

\item [(13)] 
Let $G'=G\setminus N$. 
  Add $u$ to an independent set that doesn't contain $y_5$ and add all the other vertices to the forest.

\item [(14)] 
Let $G'=G\setminus N$. 
Add $u$ to $I_1'$, add $v$ to $I_2'$, and add all other other vertices to the forest.

\end{enumerate}

\end{proof}

\par \noindent \textbf{Proof of Theorem \ref{res} $(ii)$} \\ Let $G$ be a
graph with $\Mad(G)<\frac{18}{7}$ and $g(G)\ge 6$. By Lemma
\ref{structure18/7}, there exists a vertex partition $V(G)=F\dot{\cup} I_1\dot{\cup} I_2$. Now
we color: 
\begin{itemize} 
\item $G[F]$ with the colors 1, 2, and 3 (as in Observation~\ref{3-color forest}),
\item $G[I_1]$ with the color 4, and
\item $G[I_2]$ with the color 5.

\end{itemize} 
This produces a star 5-coloring of $G$.  \hfill{$\Box$}

\begin{remark}
The condition $g(G)\ge 6$ can be dropped in Lemma \ref{structure18/7}
and so in Theorem \ref{res} $(ii)$. Since there are no 3-threads,
we just use this assumption to ensure that white and black vertices
are distinct in the configurations in Figures \ref{fig18/7} and \ref{fig18/7bis}.
\end{remark}

\section{Graphs with $\Mad(G) < \frac{8}{3}$ and $g(G)\ge 6$\label{sec8/3} }

In this section, we prove that every graph $G$ with $\Mad(G)<
\frac{8}{3}$ and $g(G)\geq 6$ is star 6-colorable.  We first prove a
structural lemmma. This lemma is then used to prove the existence of a
particular decomposition which in turns leads to the star coloring
result.

\begin{lemma}\label{lemmaStructure8/3}
A graph $G$ with $\Mad(G) < \frac{8}{3}$ and $g(G)\ge 6$ contains one of the following 8 configurations:
\begin{enumerate}
\item A $^\leq 1$-vertex.
\item A $2_{0,1}$-vertex adjacent to only $^\le 4$-vertices. 
\item A $3_{0,1,1}$-vertex adjacent to only  $^\le 3$-vertices.
\item A $k_{1,1,2,\cdots,2}$-vertex with $k\geq 5$.
\item A $k_{0,2,\cdots,2}$-vertex adjacent to a $3$-vertex with $k\geq 5$.

\item A $k_{0,1,2,\cdots,2}$-vertex adjacent to a 
  $k'_{0,1,1,2,\cdots,2}$-vertex with $k,k'\geq 5$.

\item A $k_{0,0,2,\cdots,2}$-vertex adjacent to two 
  $3_{0,1,1}$-vertices with $k\geq 5$.

\item A $k_{0,0,2,\cdots,2}$-vertex adjacent to a
  $k'_{0,1,1,2,\cdots,2}$-vertex and to a $k''_{0,1,2,2,\cdots,2}$-vertex with $k,k',k''\geq 5$.

\end{enumerate}
\end{lemma}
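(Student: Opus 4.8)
The plan is to argue by contradiction using a discharging argument, exactly parallel to the proofs of Lemmas~\ref{lemmaStructure26/11} and~\ref{lemmaStructure18/7}. Assume $G$ is a counterexample: a graph with $\Mad(G) < \frac{8}{3}$, with $g(G)\geq 6$, and containing none of the eight configurations listed. Assign to each vertex $v$ the initial charge $\ch(v) = \degree(v)$, so that $\sum_{v} \ch(v) = 2|E(G)| < \frac{8}{3}|V(G)|$. We will redistribute charge according to a set of rules that preserve the total charge, and then show that the final charge satisfies $\nch(v)\geq \frac{8}{3}$ for every $v$, yielding the contradiction
$$
\frac{8}{3} \leq \frac{\sum_{v\in V(G)}\nch(v)}{|V(G)|} = \frac{\sum_{v\in V(G)}\ch(v)}{|V(G)|} = \frac{2|E(G)|}{|V(G)|} \leq \Mad(G) < \frac{8}{3}.
$$

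Since there are no $^{\leq}1$-vertices (Configuration~1) and $g(G)\geq 6$ forbids $2$-threads that close into short cycles, every $2$-vertex sits either on no thread of length $\geq 2$ or is a $2_{0,1}$-vertex, and by Configuration~2 each $2_{0,1}$-vertex has a neighbor of degree $\geq 5$. The discharging rules I would use follow the template of Lemma~\ref{lemmaStructure18/7}: each $^{\geq}3$-vertex sends $\frac{1}{3}$ to each adjacent $2$-vertex that lies on no longer thread, sends $\frac{2}{3}$ to each adjacent $2_{0,1}$-vertex (so that a $2_{0,1}$-vertex ends with $2 + \frac{2}{3}+\frac{\cdots}{}$; one has to be careful here — a $2_{0,1}$-vertex has one $2$-vertex neighbor and one $^{\geq}5$-vertex neighbor, and I would have the $^{\geq}5$-neighbor pay the full $\frac{2}{3}$ needed, giving $\nch = \frac{8}{3}$), sends a small amount (something like $\frac{1}{3}$) to each adjacent $3_{0,1,1}$-vertex that is "needy" in the sense of Configuration~3, and sends a small amount to each adjacent $k_{0,2,\cdots,2}$-vertex with $k\geq 5$ that is likewise needy by Configuration~5. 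The exact fractions need to be tuned so that (i) a $3_{0,1,1}$-vertex adjacent to some $^{\geq}4$-vertex recovers enough, using the fact (Configuration~3) that it cannot be surrounded only by $3$-vertices, (ii) a $2_{0,1}$-vertex always has a $^{\geq}5$-vertex to lean on, and (iii) the large vertices ($k\geq 5$) survive: a $k$-vertex starts with $k$, and Configurations~4--8 cap how many threads and how many needy neighbors of each type it can have simultaneously, so its net loss is bounded by something like $\frac{2}{3}k$, leaving $\nch(v)\geq \frac{k}{3}\geq \frac{5}{3}$ — wait, that is not enough, so in fact the bound must be sharper, using that a $k_{1,\ldots}$ or $k_{0,0,\ldots}$ or $k_{0,1,\ldots}$ vertex loses strictly less than $\frac{2}{3}$ per incidence on average because of the forbidden adjacency patterns, giving $\nch(v)\geq \frac{8}{3}$.

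I would then carry out the verification case by case on $\degree(v)$: $\degree(v)=2$ (the $2_{0,1}$ case and the "isolated" $2$-vertex case), $\degree(v)=3$ (splitting on whether $v$ is a $3_{0,1,1}$-vertex, a $3_{0,0,1}$-vertex, a $3_{1,1,1}$-vertex — the latter two are actually ruled out or easy since $g\geq 6$ forbids some and Configuration~3 handles the rest — or a $3$-vertex incident to no $2$-vertex), $\degree(v)=4$ (a $4$-vertex cannot be adjacent to a $2_{0,1}$-vertex by Configuration~2, which is the key simplification at degree $4$), and $\degree(v)\geq 5$ (where Configurations~4--8 do the real work bounding the number of $2_{0,1}$-neighbors, $2$-thread incidences, and needy $3_{0,1,1}$- and $k_{0,2,\ldots}$-neighbors). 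The main obstacle I anticipate is calibrating the discharging rules and the threshold cases at $\degree(v)\geq 5$: one must verify that Configurations~6, 7, and~8 together are exactly strong enough to forbid the combinations of needy neighbors that would otherwise drain a large vertex below $\frac{8}{3}$, and this requires checking that in every admissible combination of "$a$ threads, $b$ neighbors of type $3_{0,1,1}$, $c$ neighbors of type $k_{0,2,\ldots}$, plus one neighbor of degree $\geq 5$ of the forbidden type" the arithmetic still closes — a somewhat delicate bookkeeping exercise, but entirely routine in structure. I do not expect to need a "bank" as in Lemma~\ref{lemmaStructure26/11}, since the girth-$6$ hypothesis rules out the bad-$3$-vertex phenomenon that forced the bank there.
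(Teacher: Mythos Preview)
Your approach is correct and matches the paper's proof essentially line for line: discharging with $\ch(v)=\degree(v)$, no bank, and a degree-by-degree verification using Configurations~1--8. The precise rules the paper uses (which resolve the calibration issue you flagged) are: each $^{\geq}5$-vertex gives $\tfrac{2}{3}$ to each adjacent $2_{0,1}$-vertex; each $^{\geq}3$-vertex gives $\tfrac{1}{3}$ to each adjacent $2$-vertex not adjacent to another $2$-vertex; and each $^{\geq}4$-vertex gives $\tfrac{1}{3}$ to each adjacent $3_{0,1,1}$-vertex and to each adjacent $5_{0,2,2,2,2}$-vertex (only the $k=5$ case is ``light'' and needs help; for $k\geq 6$ one has $k-(k-1)\tfrac{2}{3}\geq \tfrac{8}{3}$ already).
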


\begin{figure}[htbp]
\begin{center}
\includegraphics[width=11.0cm]{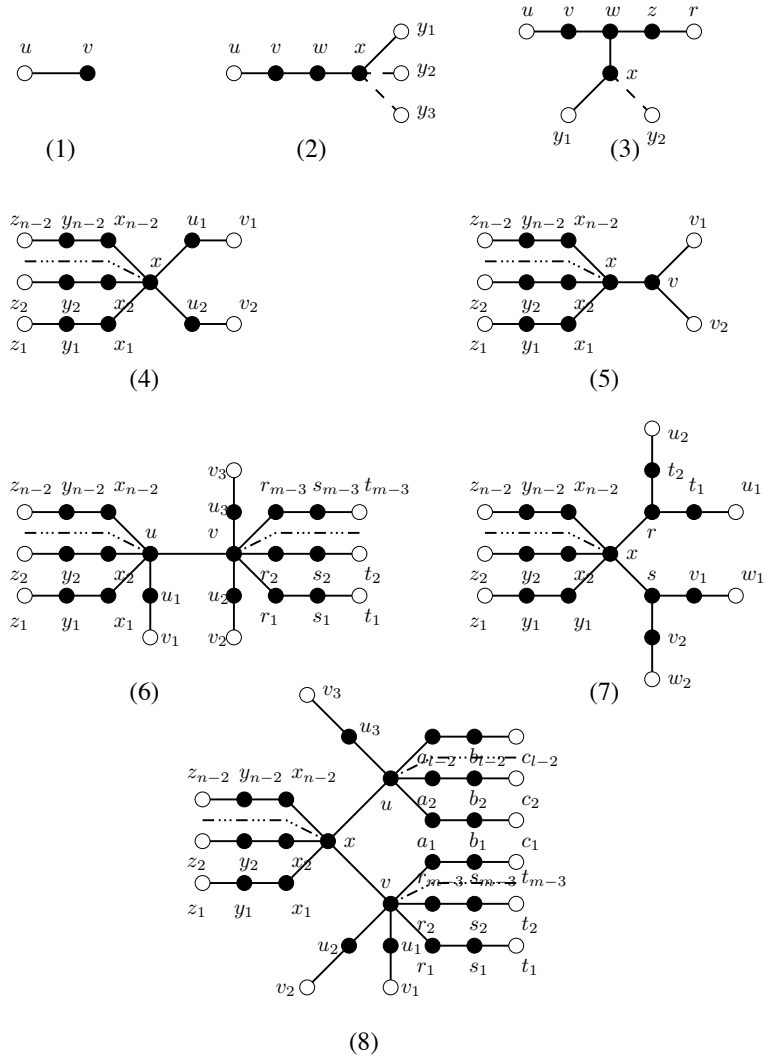}
\caption{The 8 unavoidable configurations in Lemma~\ref{lemmaStructure8/3}.} \label{fig8/3}
\end{center}
\end{figure}

\begin{proof}
Our proof is similar to the proofs of Lemmas \ref{lemmaStructure26/11} and
\ref{lemmaStructure18/7}.
Let $G$ be a counterexample. 

We use a discharging argument with initial charge $\ch(v)=d(v)$ at each vertex vertex $v$, and with the following three discharging rules (R1) -- (R3), 
which describe how to redistribute the charge.
We write $\nch(v)$ to denote the charge at each vertex $v$ after we apply the discharging rules. 
Note that the discharging rules do not change the sum of the charges.
To complete the proof, we show that $\nch(v)\ge
\frac{8}{3}$ for each $v\in V(G)$; this leads to the following obvious
contradiction:

$$
\frac{8}{3}\leq \frac{\frac{8}{3}|V(G)|}{|V(G)|} \leq \frac{\sum_{v\in
V(G)}\nch(v)}{|V(G)|} =  \frac{\sum_{v\in
V(G)}\ch(v)}{|V(G)|} = \frac{2|E(G)|}{|V(G)|} \le \Mad(G) < \frac{8}{3}.
$$
Hence no counterexample can exist.

\paragraph{}
The discharging rules are defined as follows.

\begin{itemize}
\item[(R1)] Each $^\ge 5$-vertex gives $\frac{2}{3}$ to each adjacent
  2-vertex $u$ which is adjacent to a 2-vertex ($u$ is a
  $2_{0,1}$-vertex).
\item[(R2)] Each $^\ge 3$-vertex gives $\frac{1}{3}$ to each adjacent
  2-vertex which is not adjacent to a 2-vertex. 
\item[(R3)] Each $^\ge 4$-vertex gives $\frac{1}{3}$ to each adjacent
  $3_{0,1,1}$-vertex and $\frac13$ to each adjacent $5_{0,2,2,2,2}$-vertex. 
\end{itemize}

We call $3_{0,1,1}$-vertices and $5_{0,2,2,2,2}$-vertices {\it light vertices}.  
If a vertex is the initial or final vertex of a thread, we call it an {\it end} of the thread.
By Lemma \ref{lemmaStructure8/3}.1., $\delta(G)\geq 2$.
\begin{enumerate}
\item [\emph{Case $d(v)=2$}]  If $v$ is not adjacent to any 2-vertices, then it
  receives $\frac{1}{3}$ from each neighbor by (R2); so, $\nch(v) = 2 +
  2\cdot \frac{1}{3} = \frac{8}{3}$. If $v$ is a $2_{0,1}$-vertex,
  then it is adjacent to a $^\ge 5$-vertex. By (R3), $u$ gives 
  $\frac{2}{3}$ to $v$, so $\nch(v) = 2 + \frac{2}{3} = \frac{8}{3}$.

\item [\emph{Case $d(v)=3$}]  By Lemma \ref{lemmaStructure8/3}.2 and
  the girth condition, $v$
  is not the end of a 2-thread. Moreover, by Lemma
  \ref{lemmaStructure8/3}.3, $v$ is the end of at
  most two 1-threads. 

If $v$ is the end of exactly
  two 1-threads, then it is adjacent to a $^\ge 4$-vertex, by Lemma
   \ref{lemmaStructure8/3}.3. So, $\nch(v) = 3 - 2\cdot \frac{1}{3} +
  \frac{1}{3}  = \frac{8}{3}$ by (R2) and (R3). 

If $v$ is the end 
  of at most one 1-thread, then $\nch(v)\geq 3 - \frac{1}{3} = \frac{8}{3}$.

\item [\emph{Case $d(v)=4$}] By (R2) and (R3), $\nch(v) \geq 4 - 4
  \cdot \frac{1}{3} = \frac{8}{3}$.

\item [\emph{Case $d(v) = 5$}] By Lemma \ref{lemmaStructure8/3}.4, $v$
  is the end of at most four 2-threads. 

If $v$ is the end of exactly four 2-threads ($v$ is
  light), then it is neither adjacent to a $^\leq 3$-vertex, by Lemmas
  \ref{lemmaStructure8/3}.4 and \ref{lemmaStructure8/3}.5, nor to a
  $5_{0,2,2,2,2}$-vertex, by Lemma \ref{lemmaStructure8/3}.6. Hence, $v$ 
  receives $\frac{1}{3}$ by (R3). So, $\nch(v)=5-4\cdot \frac{2}{3} +
  \frac{1}{3} = \frac{8}{3}$. 

If $v$ is the end of exactly three 2-threads,
  then $v$ is adjacent to at most one light vertex, by Lemmas
  \ref{lemmaStructure8/3}.7 and \ref{lemmaStructure8/3}.8. So,
  $\nch(v) \ge 5 - 3 \cdot \frac{2}{3} - \frac{1}{3} \geq
  \frac{8}{3}$. 

Finally, if $v$ is the end of at most two 2-threads, then $\nch(v) \geq 5 - 2 \cdot
  \frac{2}{3} - 3 \cdot \frac{1}{3} \geq \frac{8}{3}$.
\item [\emph{Case $d(v) \ge  6$}] By Lemma \ref{lemmaStructure8/3}.4,
  $v$ is the end of at most $d(v)-1$ 2-threads. 

If $v$ is the end of exactly $d(v)-1$ 2-threads,
  then it is neither adjacent to a $^\leq 3$-vertex, by Lemmas
  \ref{lemmaStructure8/3}.4 and \ref{lemmaStructure8/3}.5, nor to a
  $5_{0,2,2,2,2}$-vertex, by Lemma \ref{lemmaStructure8/3}.6; hence,
  $\nch(v) = d(v) - (d(v)-1)\cdot \frac{2}{3} \geq \frac{8}{3}$. 

If $v$ is the end of exactly $d(v)-2$ 2-threads,
 then  $\nch(v) \ge d(v) - (d(v)-2)\cdot \frac{2}{3} - 2 \cdot \frac{1}{3} \geq
  \frac{8}{3}$. 

Finally, if $v$ is the end of at most $d(v)-3$ 2-threads, then $\nch(v) \geq d(v) - (d(v)-3)\cdot
  \frac{2}{3} - 3 \cdot \frac{1}{3} \geq 3$.

\end{enumerate}
This implies that $\Mad(G)$ $\geq \frac{8}{3}$, which provides the needed contradiction.
\end{proof}

\begin{lemma}\label{structure8/3}
Let $G$ be a graph with  $\Mad(G) < \frac{8}{3}$ and $g(G)\ge 6$. Then there exists a vertex partition $V(G) = F \dot{\cup}
I_1 \dot{\cup} I_2 \dot{\cup} I_3$ such that:
\begin{enumerate}
\item[P1.] $F$ induces a forest. 
\item[P2.] $I_i$ is an independent set such that for all $x,y \in I_i$,
  $d_G(x,y)> 2$ for $i=1,2,3$.
\end{enumerate}
\end{lemma}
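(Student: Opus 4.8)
The plan is to follow the proof of Lemma~\ref{structure18/7} almost verbatim, now with three independent sets in place of two and with the eight configurations of Lemma~\ref{lemmaStructure8/3} in place of the fourteen of Lemma~\ref{lemmaStructure18/7}. First I would take $G$ to be a counterexample with the fewest vertices; as before we may assume $\delta(G)\geq 2$, and by Lemma~\ref{lemmaStructure8/3} the graph $G$ contains one of configurations (1)--(8). Since $g(G)\geq 6$, these configurations appear essentially as drawn in Figure~\ref{fig8/3}: the black vertices are pairwise distinct, each vertex is distinct from the far ends of its threads, and two threads leaving a common vertex meet only at that vertex. For each configuration I would delete the set $N$ of black vertices and set $G'=G-N$; since $\Mad(G')\leq \Mad(G)<\frac83$ and $g(G')\geq g(G)\geq 6$, minimality supplies a partition $V(G')=F'\,\dot{\cup}\,I_1'\,\dot{\cup}\,I_2'\,\dot{\cup}\,I_3'$ obeying P1 and P2 (if $G'$ is empty then $G$ is tiny and is handled by hand). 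We may also assume that every vertex of degree at most $1$ in $G'$ lies in $F'$, as moving such a vertex to $F'$ keeps it a leaf and can only help P2. It then remains to reinsert the vertices of $N$ into $F'$, $I_1'$, $I_2'$, $I_3'$ without violating P1 or P2.

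Two elementary observations make the reinsertion manageable. First, if $v-a-b-w$ is a $2$-thread, then $g(G)\geq 6$ forces $d_G(v,w)=3$, so the far end of a $2$-thread imposes no restriction on which $I_i$ may receive the initial vertex $v$; hence when $v$ is a ``hub'' of one of the large configurations (degree at least $5$, incident to several $2$-threads), the only vertices that can block a color for $v$ are the far ends of its $0$- and $1$-threads and the neighbors of the far ends of its $0$-threads --- a bounded number of vertices, so with three colors available at least one is usually free. Second, a thread-internal vertex placed in $F'$ has at most one $F'$-neighbor unless both of its thread-neighbors lie in $F'$, and for a $2$-thread $v-a-b-w$ with $v$ already assigned to some $I_i$ the vertex $a$ is automatically a leaf of $F'$. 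Consequently, once the hubs have been colored, returning every thread-internal vertex to $F'$ creates a cycle only along a path $v-a-b-w-b'-a'-v$ coming from two $2$-threads of $v$ that share a far end $w$ (which $g(G)\geq 6$ permits), and such a cycle is destroyed by diverting one internal vertex of one of these two threads into an independent set, for which a color is available by the first observation together with a count of the few remaining constraints.

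Armed with these, the argument proceeds configuration by configuration exactly as in Section~\ref{sec18/7}. Configuration~(1) just adds the $^{\leq}1$-vertex to $F'$. Configurations~(2) and~(3) contain only one low-degree vertex and a short thread, so reinsertion is a short case analysis on the two or three white attachment vertices: put the deleted low-degree vertex in an unblocked independent set and the thread-internal vertex in $F'$, with the fallback (described below) when all three colors are blocked. Configurations~(4)--(8) each contain one or more hubs, each with at least two spare $2$-threads; for each such configuration one assigns the hubs to independent sets avoiding their (few) blocked colors, returns all thread-internal vertices to $F'$, and repairs the at most a handful of cycles produced, as in the second observation.

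The step I expect to be the main obstacle is the reinsertion for configurations~(6), (7), and~(8), where two or three hubs lie mutually at distance $1$ or $2$ and must therefore receive pairwise distinct colors, while at the same time the three colors must still have room for the internal vertices diverted out of $F'$ to break cycles, and the pre-existing colors of the white boundary vertices may conspire to block everything. The remedy, exactly as in the proof of Lemma~\ref{structure18/7} (``we change $x$ and put it in $F$''), is to recolor one carefully chosen white vertex into $F'$ --- one must verify this keeps $F'$ acyclic, using that after the deletions this vertex has few surviving neighbors or that its two surviving neighbors are not otherwise joined in $F'$ --- and thereby fall back to a previously handled sub-case; a parallel remedy (recoloring a $3$-vertex adjacent to a hub into $F'$) handles the cases, such as~(5), where a hub's $0$-thread makes all three colors unavailable. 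Tracking the interplay between ``which colors are free for the hubs'' and ``which colors the cycle-breaking internal vertices need'' through all the sub-cases of~(8) is the part that demands the most care; everything else is routine and mirrors Section~\ref{sec18/7}.
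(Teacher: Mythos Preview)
Your plan is essentially the paper's own proof: take a minimal counterexample, locate one of the eight configurations via Lemma~\ref{lemmaStructure8/3}, delete the black vertices $N$, partition $G-N$ by minimality, and reinsert configuration by configuration. One simplification worth noting: the cycle-repair step you anticipate for configurations (4)--(8) is unnecessary, because once a hub $v$ is placed in some $I_i$ the $2$-thread internals you add to $F'$ are pendant paths attached to $F'$ only at their far ends, and the walk $v$--$a$--$b$--$w$--$b'$--$a'$--$v$ is not a cycle in $F$ (since $v\notin F$); accordingly the paper just assigns each hub to an $I_i$ avoiding at most two white constraints and sends every thread-internal vertex straight to $F'$, with brief fallback checks only in (2), (3), (5), and (7).
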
  

\begin{proof}
Let $G$ be a counterexample with the fewest vertices. By Lemma
\ref{lemmaStructure8/3}, $G$ contains one of the 8 configurations
in Figure \ref{fig8/3}. 
In each configuration,
let $N$ denote the set of black vertices. 
In each case, we 
delete a subgraph $H$ and partition $V(G-H)$ into sets $F'$, $I_1'$, $I_2'$, and $I_3'$ that satisfy $P_1$ and $P_2$.
We now show how to add the deleted vertices to $F'$, $I_1'$, $I_2'$, and $I_3'$ so that the resulting sets still satisfy $P_1$ and $P_2$.

We consider each of the 8 configurations in Figure~\ref{fig8/3}:

\begin{enumerate}
\item [(1)] 
Let $G'=G-v$. By the minimality of $G$, 
  there exists a vertex partition $V(G')=F'\dot{\cup}I_1' \dot{\cup}I_2' \dot{\cup}I_3'$ satisfying P1 and P2. Add $v$ to $F'$.
\item [(2)] Let $G'=G\setminus\{v,w\}$ and 
$V(G')=F'\dot{\cup}I_1' \dot{\cup}I_2' \dot{\cup}I_3'$. 
If at least one of the vertices $u$ and $x$ belongs to an independent set, then 
  add $v$ and $w$ to $F'$.
  So assume that $x,u \in F'$. 
  If all three $y_i$s are in distinct independent sets, then add $v$ and $w$ to $F'$.
  Otherwise, add $v$ to the forest and add $w$ to an independent set that does not contain any $y_i$.

\item [(3)] Let $G'=G\setminus\{v,w,z\}$. If three or more of the vertices
  $u$, $r$, $y_1$, $y_2$ belong to independent sets, then add $v$, $w$, $z$
  to the forest. So suppose that at most two of these vertices belong to
  independent sets. 
Add $w$ to an independent set that 
  does not contain any of $u$, $r$, $y_1$, $y_2$; add all the other vertices to the forest. 
It may happen that $w$ and $x$ are in the same independent set;
  in this case, we can either add $v$, $w$, $z$ to the forest or we can move $x$
  to the forest.
\item [(4)] Let  $G'=G \setminus N$. Add
  $x$ to an independent set that contains neither $v_1$ nor $v_2$ and add
  all the other vertices to the forest.

\item [(5)] Let  $G'=G \setminus \{ \mbox{black vertices except $v$} \}$. Add
  $x$ to an independent set that contains neither $v_1$ nor $v_2$ and add
  all the other vertices to the forest (move $v$ to the forest if necessary).
\item [(6)] Let  $G'=G \setminus N$. Add $v$ to an independent set that 
  contains neither $v_2$ nor $v_3$, add $u$ to an independent set that 
  contains neither $v$ nor $v_1$, and add the all the other vertices to the forest.

\item [(7)] Let $G'=G \setminus N$. 
  Add $r$ to an independent set that contains neither $u_1$ nor $u_2$, and
  add $s$ to an independent set that contains neither $w_1$, nor $u_2$, nor $r$;
if there is no choice for $s$, then add $s$ to the forest.  Add $x$ to an independent set that contains neither $r$ nor $s$, and add all the other vertices to the forest.
\item [(8)] Let  $G'=G \setminus N$. 
  Add $v$ to an independent set that contains neither $v_1$ nor $v_2$,
  add $u$ to an independent set that contains neither $v$ nor $v_3$, and
  add $x$ to an independent set that contains neither $u$ nor $v$.
 Finally, add all the other vertices to the forest.
\end{enumerate}

\end{proof}

\par \noindent \textbf{Proof of Theorem \ref{res} $(iii)$} \\ Let $G$ be a
graph with $\Mad(G)<\frac{8}{3}$ and $g(G)\ge 6$. By Lemma
\ref{structure8/3}, there exists a vertex partition $V(G)=F\dot{\cup} I_1\dot{\cup} I_2 \dot{\cup} I_3$. Now
we color:
\begin{itemize} 
\item $F$ with the colors 1, 2, and 3 (as in Observation~\ref{3-color forest}),
\item $I_1$ with the color 4, 
\item $I_2$ with the color 5, and
\item $I_3$ with the color 6.

\end{itemize} 
This produces a star 6-coloring of $G$.  \hfill{$\Box$}

\begin{observation}
The condition $g(G)\ge 6$ can be dropped. Since there are no 3-threads,
we just use this assumption to ensure that white and black vertices
are distinct in the configurations in Figure~\ref{fig8/3}. 
\end{observation}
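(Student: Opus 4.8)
The plan is to audit every place where the hypothesis $g(G)\ge 6$ enters Section~\ref{sec8/3} and argue that in each case it is used only to guarantee that the vertices labelled in the configurations of Figure~\ref{fig8/3} are pairwise distinct (equivalently, that no displayed edge repeats and no displayed path closes up); granting this, Lemma~\ref{lemmaStructure8/3}, Lemma~\ref{structure8/3}, and hence Theorem~\ref{res}$(iii)$ all survive with the girth hypothesis deleted. The essential observation is that no configuration in Figure~\ref{fig8/3} contains a thread with more than two internal vertices, so any two labelled vertices of a configuration are joined in $G$ by a path of bounded length; consequently an identification of two such vertices, or a repeated edge among them, forces a cycle of bounded length in $G$.

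First I would revisit the discharging proof of Lemma~\ref{lemmaStructure8/3}. There the girth condition is invoked only to upgrade statements of the form ``$v$ is the end of a short thread'' to ``$v$ exhibits one of the eight configurations'' --- for instance, ruling out a $2$-thread at a $3$-vertex. One checks that each such implication in fact holds from the forbidden configurations alone unless the thread wraps around, and in that degenerate case $G$ contains a cycle of length at most four. In a minimal counterexample that short cycle either already constitutes (a suitably read version of) one of the eight configurations or lies in a component small enough to decompose by hand; in either event the charge count, which depends only on vertex degrees and on the \emph{presence} of some configuration, goes through verbatim. Hence Lemma~\ref{lemmaStructure8/3} holds without assuming $g(G)\ge6$.

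Next I would run through the eight cases in the proof of Lemma~\ref{structure8/3} and confirm that each extension step is unaffected by any admissible identification of its white (and, where such identification is forced, black) vertices. In almost every case this is automatic: the deleted black vertices are placed either into $F$ or into an independent set chosen to avoid a short list of white ``second-neighbour'' vertices, and merging two entries of that list can only shorten it, which eases the choice. The few genuinely new situations --- a white end of a long configuration coinciding with another white vertex, or a white vertex coinciding with a black vertex of the same configuration --- yield a component with very few vertices, for which a partition $V=F\,\dot{\cup}\,I_1\,\dot{\cup}\,I_2\,\dot{\cup}\,I_3$ obeying P1--P2 is exhibited directly, contradicting minimality. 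I expect the only real obstacle to be bookkeeping: for each of the eight configurations one must list the pairs of labelled vertices that may coincide once the girth bound is dropped and verify case by case that the colour-extension argument is untouched. Organising this by grouping the configurations according to the maximum distance between their black vertices keeps the enumeration short and finite, after which the girth-free forms of Lemma~\ref{structure8/3} and Theorem~\ref{res}$(iii)$ follow exactly as in the text.
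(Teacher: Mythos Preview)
Your plan is correct and mirrors the paper's own justification, which is nothing more than the sentence already contained in the observation: the authors give no case analysis and simply remark that the girth hypothesis served only to keep the labelled (black and white) vertices of Figure~\ref{fig8/3} distinct, leaving the routine verification to the reader. Your outline is therefore considerably more detailed than what the paper supplies; the one small wrinkle is that your discussion of Lemma~\ref{lemmaStructure8/3} speaks of ``decomposing a small component by hand,'' which is not the right idiom for a pure discharging statement---the cleaner point there is that forbidding configuration~2 already excludes any $2$-vertex with two $2$-neighbours, so there are no $3$-threads and the single invocation of girth in that proof is redundant.
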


\section{Conclusion \label{secConc}}
In this paper, we proved that if $G$ is a graph with $\Mad(G) <
\frac{26}{11}$ (resp. $\Mad(G)< \frac{18}{7}$ and $g(G)\geq 6$, $\Mad(G) < \frac{8}{3}$ and
$g(G)\geq 6$), then $\chi_s(G)\leq 4$ (resp. 5, 6). Are these results
optimal? Let us define a function $f$ as follows.

\begin{definition}
Let $f:\mathbb{N}\rightarrow\mathbb{R}$ be defined by

$$f(n)=\inf\{\Mad(H)\ |\ \chi_s(H)>n\}$$

\end{definition}

Observe that:
$$
f(1)=1.
$$
Actually, if $G$ contains an edge, then $\chi_s(G)>1$; otherwise, $G$
is a independent set, so $\chi_s(G)=1$ and $\Mad(G)=0$.  Note also that:

$$
f(2)=\frac{3}{2}.
$$
If $G$ contains a path of length 3, then $\chi_s(G)> 2$ and
$\Mad(G)\geq \frac{3}{2}$. Otherwise, $G$ is a star forest, so 
$\chi_s(G)\leq 2$ and $\Mad(G)< 2$.  Finally, note that:

$$
f(3)=2.
$$
If $\Mad(G)< 2$, then $G$ is a forest and $\chi_s(G)=3$. In contrast, if $G$ is
a 5-cycle, then $\Mad(G)=2$ and $\chi_s(G)=4$.

\begin{problem}
What is the value of $f(n)$ for $n\geq 4$?
\end{problem}

\begin{figure}[htbp]
\begin{center}
\includegraphics[width=5cm]{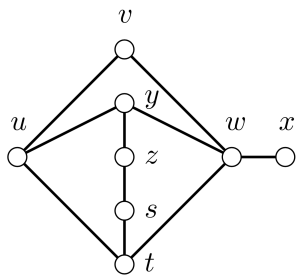}
\caption{A graph $G$ with $\Mad(G)=\frac{18}{7}$ and $\chi_s(G)>4$.} \label{exampleFig}
\end{center}
\end{figure}

From Theorem~\ref{res}.1, we know that:

$$
\frac{26}{11}\leq f(4).
$$


The graph $G$, in Figure \ref{exampleFig}, has $\Mad(G)=\frac{18}{7}$ 
and $\chi_s(G)>4$, so $G$ gives an upper bound on $f(4)$.
Hence:

$$\frac{26}{11}=\frac{182}{77}\le f(4)\le \frac{198}{77}=\frac{18}{7}$$

\section{Acknowledgements}
Thanks to Craig Timmons and Andr\'e K\"undgen for helpful discussions.

\end{document}